
\documentclass{article}

\usepackage{tikz}
\usepackage{array}
\usepackage{amsmath}
\usepackage{amsthm}
\usepackage{amssymb}
\usepackage{multirow}
\usepackage{cite}
\usepackage{float} \usetikzlibrary{decorations.pathreplacing}
\newcommand{\Asterisk}{\mathop{\scalebox{1.5}{\raisebox{0.0ex}{{\footnotesize{$\ast$}}}}}}%
\newcommand\unv[1]{
\phantom{\Asterisk} \hfill #1 \hfill \Asterisk
}

\tikzstyle{smallgraph}=[
  every node/.style={circle,draw,fill=black!30,inner sep=0pt,minimum width=4pt},
  every path/.append style={semithick}
]
\tikzstyle{largegraph}=[
  every node/.style={circle,draw,fill=black,inner sep=0pt,minimum width=8pt},
  every path/.append style={semithick}
]

\tikzstyle{every node}=[circle, draw, fill=black!100,
                        inner sep=0pt, minimum width=6pt]
\tikzstyle{altfill}=[fill=white]

\pgfmathsetmacro{\threevradius}{0.375*sqrt(2/3)}
\pgfmathsetmacro{\fourvradius}{0.375}
\pgfmathsetmacro{\fivevradius}{0.375}
\pgfmathsetmacro{\sixvspacing}{0.36}
\pgfmathsetmacro{\sevenvradius}{0.36}
\pgfmathsetmacro{\eightvradius}{0.5}
\pgfmathsetmacro{\topgspacing}{0.15}
\pgfmathsetmacro{\btmgspacing}{0.15}

\newcommand\fourvertex{%
\foreach \n/\a in {1/135,2/45,3/225,4/315}{\node (\n) at (\a:\fourvradius) {};}%
\path (1) -- ++(90:\topgspacing) node[draw=none,fill=none,minimum width=0pt] () {};%
\path (3) -- ++(270:\btmgspacing) node[draw=none,fill=none,minimum width=0pt] () {};%
}
\newcommand\fivevertex{%
\foreach \n/\a in {1/135,2/45,3/225,4/315}{\node (\n) at (\a:\fivevradius) {};} \node (5) at (0,0) {};%
\path (2) -- ++(90:\topgspacing) node[draw=none,fill=none,minimum width=0pt] () {};%
\path (3) -- ++(270:\btmgspacing) node[draw=none,fill=none,minimum width=0pt] () {};%
}
\newcommand\sixvertex{%
\node (1) at (0,0) {}; \def\lastn{1}\foreach \n/\a [remember=\n as \lastn] in%
    {2/0,3/315,4/225,5/180,6/135}{\path (\lastn) -- ++(\a:\sixvspacing) node (\n) {};}%
\path (1) -- ++(90:\topgspacing) node[draw=none,fill=none,minimum width=0pt] () {};%
\path (5) -- ++(270:\btmgspacing) node[draw=none,fill=none,minimum width=0pt] () {};%
}

\makeatletter
\newcommand\addcase[3]{\expandafter\def\csname\string#1@case@#2\endcsname{#3}}
\newcommand\makeswitch[2][]{%
  \newcommand#2[1]{%
    \ifcsname\string#2@case@##1\endcsname\csname\string#2@case@##1\endcsname\else#1\fi%
  }%
}
\makeatother

\newcommand{\ctikz}[1]{\ensuremath{\vcenter{\hbox{#1}}}}

\makeswitch[Missing graph]\ngraph


\addcase\ngraph{4.1}{\begin{tikzpicture}[smallgraph]\fourvertex\draw (1) -- (3) -- (4); \draw (3) -- (2);\end{tikzpicture} }
\addcase\ngraph{4.2}{\begin{tikzpicture}[smallgraph]\fourvertex\draw (1) -- (3) -- (4) -- (2);\end{tikzpicture}}
\addcase\ngraph{4.3}{\begin{tikzpicture}[smallgraph]\fourvertex\draw (1) -- (3) -- (4) -- (1); \draw (4) -- (2);\end{tikzpicture}}
\addcase\ngraph{4.4}{\begin{tikzpicture}[smallgraph]\fourvertex\draw (1) -- (3) -- (4) -- (2) -- (1);\end{tikzpicture}}
\addcase\ngraph{4.5}{\begin{tikzpicture}[smallgraph]\fourvertex\draw (1) -- (3) -- (4) -- (1); \draw (4) -- (2) -- (1);\end{tikzpicture}}
\addcase\ngraph{4.6}{\begin{tikzpicture}[smallgraph]\fourvertex\draw (1) -- (3) -- (4) -- (1); \draw (4) -- (2) -- (1); \draw (2) -- (3);\end{tikzpicture}}

\addcase\ngraph{5.1}{\begin{tikzpicture}[smallgraph]\fivevertex\draw (3) -- (1) -- (5) -- (4) -- (2);\end{tikzpicture}}
\addcase\ngraph{5.2}{\begin{tikzpicture}[smallgraph]\fivevertex\foreach \n in {1,2,3,4} {\draw (\n) -- (5);}\end{tikzpicture}}
\addcase\ngraph{5.3}{\begin{tikzpicture}[smallgraph]\fivevertex\draw (1) -- (2) -- (5) -- (3); \draw (5) -- (4);\end{tikzpicture}}
\addcase\ngraph{5.4}{\begin{tikzpicture}[smallgraph]\fivevertex\draw (1) -- (2) -- (5) -- (3) -- (4) -- (5);\end{tikzpicture}}
\addcase\ngraph{5.5}{\begin{tikzpicture}[smallgraph]\fivevertex\draw (3) -- (1) -- (5) -- (2) -- (4); \draw (1) -- (2);\end{tikzpicture}}
\addcase\ngraph{5.6}{\begin{tikzpicture}[smallgraph]\fivevertex\draw (1) -- (2); \foreach \n in {1,2,3,4} {\draw (\n) -- (5);}\end{tikzpicture}}
\addcase\ngraph{5.7}{\begin{tikzpicture}[smallgraph]\fivevertex\draw (1) -- (2) -- (4) -- (3) -- (1) -- (5);\end{tikzpicture}}
\addcase\ngraph{5.8}{\begin{tikzpicture}[smallgraph]\fivevertex\draw (1) -- (5) -- (2) -- (4) -- (3) -- (1);\end{tikzpicture}}
\addcase\ngraph{5.9}{\begin{tikzpicture}[smallgraph]\fivevertex\draw (1) -- (2) -- (5) -- (4) -- (3) -- (5) -- (1);\end{tikzpicture}}
\addcase\ngraph{5.10}{\begin{tikzpicture}[smallgraph]\fivevertex\draw (1) -- (2) -- (4) -- (3) -- (1) -- (5) -- (4);\end{tikzpicture}}
\addcase\ngraph{5.11}{\begin{tikzpicture}[smallgraph]\fivevertex\draw (1) -- (2) -- (5) -- (3) -- (1) -- (5) -- (4);\end{tikzpicture}}
\addcase\ngraph{5.12}{\begin{tikzpicture}[smallgraph]\fivevertex\draw (1) -- (2) -- (5) -- (3) -- (1) -- (5); \draw (2) -- (4);\end{tikzpicture}}
\addcase\ngraph{5.13}{\begin{tikzpicture}[smallgraph]\fivevertex\draw (1) -- (5) -- (2) -- (4) -- (3) -- (1) -- (2);\end{tikzpicture}}
\addcase\ngraph{5.14}{\begin{tikzpicture}[smallgraph]\fivevertex\draw (2) -- (1) -- (3) -- (4); \foreach \n in {1,2,3,4} {\draw (\n) -- (5);}\end{tikzpicture}}
\addcase\ngraph{5.15}{\begin{tikzpicture}[smallgraph]\fivevertex\draw (1) -- (2) -- (4) -- (3) -- (1) -- (5) -- (4); \draw[bend left] (1) to (4);\end{tikzpicture}}
\addcase\ngraph{5.16}{\begin{tikzpicture}[smallgraph]\fivevertex\draw (2) -- (1) -- (3) -- (4) -- (5) -- (1); \draw (3) -- (5); \draw[bend left] (1) to (4);\end{tikzpicture}}
\addcase\ngraph{5.17}{\begin{tikzpicture}[smallgraph]\fivevertex\draw (1) -- (2) -- (4) -- (3) -- (1) -- (5) -- (3); \draw (5) -- (4);\end{tikzpicture}}
\addcase\ngraph{5.18}{\begin{tikzpicture}[smallgraph]\fivevertex\draw (1) -- (2) -- (4) -- (3) -- (1) -- (5) -- (3); \draw (5) -- (4); \draw[bend left] (1) to (4);\end{tikzpicture}}
\addcase\ngraph{5.19}{\begin{tikzpicture}[smallgraph]\fivevertex\draw (1) -- (2) -- (4) -- (3) -- (1); \foreach \n in {1,2,3,4} {\draw (\n) -- (5);}\end{tikzpicture}}
\addcase\ngraph{5.20}{\begin{tikzpicture}[smallgraph]\fivevertex\draw (1) -- (2) -- (4) -- (3) -- (1); \foreach \n in {1,2,3,4} {\draw (\n) -- (5);} \draw[bend left] (1) to (4);\end{tikzpicture}}
\addcase\ngraph{5.21}{\begin{tikzpicture}[smallgraph]\fivevertex\draw (1) -- (2) -- (4) -- (3) -- (1); \foreach \n in {1,2,3,4} {\draw (\n) -- (5);} \draw[bend left] (1) to (4); \draw[bend left] (3) to (2);\end{tikzpicture}}

\addcase\ngraph{6.1}{\begin{tikzpicture}[smallgraph]\sixvertex\end{tikzpicture}}
\addcase\ngraph{6.2}{\begin{tikzpicture}[smallgraph]\sixvertex \draw (1) -- (2);\end{tikzpicture}}
\addcase\ngraph{6.3}{\begin{tikzpicture}[smallgraph]\sixvertex \draw (6) -- (1) -- (2);\end{tikzpicture}}
\addcase\ngraph{6.4}{\begin{tikzpicture}[smallgraph]\sixvertex \draw (1) -- (2); \draw (5) -- (6);\end{tikzpicture}}
\addcase\ngraph{6.5}{\begin{tikzpicture}[smallgraph]\sixvertex \draw (6) -- (1) -- (2) -- (3);\end{tikzpicture}}
\addcase\ngraph{6.6}{\begin{tikzpicture}[smallgraph]\sixvertex \draw (1) -- (5) -- (6) -- (1);\end{tikzpicture}}
\addcase\ngraph{6.7}{\begin{tikzpicture}[smallgraph]\sixvertex \draw (6) -- (1) -- (2); \draw (1) -- (5);\end{tikzpicture}}
\addcase\ngraph{6.8}{\begin{tikzpicture}[smallgraph]\sixvertex \draw (1) -- (6) -- (5); \draw (2) -- (3);\end{tikzpicture}}
\addcase\ngraph{6.9}{\begin{tikzpicture}[smallgraph]\sixvertex \draw (1) -- (6); \draw (2) -- (3); \draw (4) -- (5);\end{tikzpicture}}
\addcase\ngraph{6.10}{\begin{tikzpicture}[smallgraph]\sixvertex \draw (2) -- (1) -- (6) -- (5) -- (1);\end{tikzpicture}}
\addcase\ngraph{6.11}{\begin{tikzpicture}[smallgraph]\sixvertex \draw (1) -- (5) -- (4) -- (2) -- (1);\end{tikzpicture}}
\addcase\ngraph{6.12}{\begin{tikzpicture}[smallgraph]\sixvertex \draw (5) -- (6) -- (1) -- (2) -- (3);\end{tikzpicture}}
\addcase\ngraph{6.13}{\begin{tikzpicture}[smallgraph]\sixvertex \draw (6) -- (1) -- (5); \draw (2) -- (1) -- (4);\end{tikzpicture}}
\addcase\ngraph{6.14}{\begin{tikzpicture}[smallgraph]\sixvertex \draw (6) -- (1) -- (2) -- (3); \draw (1) -- (5);\end{tikzpicture}}
\addcase\ngraph{6.15}{\begin{tikzpicture}[smallgraph]\sixvertex \draw (1) -- (5) -- (6) -- (1); \draw (2) -- (3);\end{tikzpicture}}
\addcase\ngraph{6.16}{\begin{tikzpicture}[smallgraph]\sixvertex \draw (6) -- (1) -- (2); \draw (3) -- (4) -- (5);\end{tikzpicture}}
\addcase\ngraph{6.17}{\begin{tikzpicture}[smallgraph]\sixvertex \draw (6) -- (1) -- (2) -- (3); \draw (4) -- (5);\end{tikzpicture}}
\addcase\ngraph{6.18}{\begin{tikzpicture}[smallgraph]\sixvertex \draw (6) -- (1) -- (2); \draw (3) -- (4); \draw (1) -- (5);\end{tikzpicture}}
\addcase\ngraph{6.19}{\begin{tikzpicture}[smallgraph]\sixvertex \draw (2) -- (1) -- (6) -- (5) -- (4); \draw (1) -- (5);\end{tikzpicture}}
\addcase\ngraph{6.20}{\begin{tikzpicture}[smallgraph]\sixvertex \draw (1) -- (2) -- (4) -- (5) -- (6) -- (1);\end{tikzpicture}}
\addcase\ngraph{6.21}{\begin{tikzpicture}[smallgraph]\sixvertex \draw (3) -- (2) -- (1) -- (6) -- (5) -- (1);\end{tikzpicture}}
\addcase\ngraph{6.22}{\begin{tikzpicture}[smallgraph]\sixvertex \draw (6) -- (1) -- (2) -- (4) -- (5) -- (1);\end{tikzpicture}}
\addcase\ngraph{6.23}{\begin{tikzpicture}[smallgraph]\sixvertex \draw (1) -- (2) -- (4) -- (5) -- (1) -- (4);\end{tikzpicture}}
\addcase\ngraph{6.24}{\begin{tikzpicture}[smallgraph]\sixvertex \draw (2) -- (1) -- (6) -- (5) -- (1) -- (4);\end{tikzpicture}}
\addcase\ngraph{6.25}{\begin{tikzpicture}[smallgraph]\sixvertex \draw (2) -- (1) -- (6) -- (5) -- (4) -- (3);\end{tikzpicture}}
\addcase\ngraph{6.26}{\begin{tikzpicture}[smallgraph]\sixvertex \draw (1) -- (6) -- (5) -- (4) -- (3); \draw (2) -- (4);\end{tikzpicture}}
\addcase\ngraph{6.27}{\begin{tikzpicture}[smallgraph]\sixvertex \draw (1) -- (6) -- (5) -- (4); \draw (2) -- (5) -- (3);\end{tikzpicture}}
\addcase\ngraph{6.28}{\begin{tikzpicture}[smallgraph]\sixvertex \draw (3) -- (2) -- (1) -- (5) -- (4); \draw (1) -- (6);\end{tikzpicture}}
\addcase\ngraph{6.29}{\begin{tikzpicture}[smallgraph]\sixvertex \draw (6) -- (1) -- (5); \draw (1) -- (2) -- (3); \draw (2) -- (4);\end{tikzpicture}}
\addcase\ngraph{6.30}{\begin{tikzpicture}[smallgraph]\sixvertex \draw (1) -- (5) -- (6) -- (1); \draw (2) -- (3) -- (4);\end{tikzpicture}}
\addcase\ngraph{6.31}{\begin{tikzpicture}[smallgraph]\sixvertex \draw (1) -- (2) -- (3); \draw (4) -- (2) -- (5); \draw (2) -- (6);\end{tikzpicture}}
\addcase\ngraph{6.32}{\begin{tikzpicture}[smallgraph]\sixvertex \draw (1) -- (4) -- (5) -- (6) -- (1); \draw (2) -- (3);\end{tikzpicture}}
\addcase\ngraph{6.33}{\begin{tikzpicture}[smallgraph]\sixvertex \draw (4) -- (5) -- (6) -- (1) -- (5); \draw (2) -- (3);\end{tikzpicture}}
\addcase\ngraph{6.34}{\begin{tikzpicture}[smallgraph]\sixvertex \draw (1) -- (2) -- (4) -- (5) -- (1) -- (4); \draw (2) -- (5);\end{tikzpicture}}
\addcase\ngraph{6.35}{\begin{tikzpicture}[smallgraph]\sixvertex \draw (1) -- (2) -- (4) -- (5) -- (6) -- (1) -- (5);\end{tikzpicture}}
\addcase\ngraph{6.36}{\begin{tikzpicture}[smallgraph]\sixvertex \draw (1) -- (2) -- (4) -- (1) -- (5) -- (6) -- (1);\end{tikzpicture}}
\addcase\ngraph{6.37}{\begin{tikzpicture}[smallgraph]\sixvertex \draw (3) -- (2) -- (1) -- (5) -- (4) -- (2) -- (5);\end{tikzpicture}}
\addcase\ngraph{6.38}{\begin{tikzpicture}[smallgraph]\sixvertex \draw (3) -- (4) -- (5) -- (1) -- (2) -- (4); \draw (2) -- (5);\end{tikzpicture}}
\addcase\ngraph{6.39}{\begin{tikzpicture}[smallgraph]\sixvertex \draw (1) -- (2) -- (3) -- (5) -- (1); \draw (2) -- (4) -- (5);\end{tikzpicture}}
\addcase\ngraph{6.40}{\begin{tikzpicture}[smallgraph]\sixvertex \draw (1) -- (2) -- (3) -- (4) -- (5) -- (6) -- (1);\end{tikzpicture}}
\addcase\ngraph{6.41}{\begin{tikzpicture}[smallgraph]\sixvertex \draw (2) -- (3) -- (4) -- (5) -- (6) -- (1) -- (3);\end{tikzpicture}}
\addcase\ngraph{6.42}{\begin{tikzpicture}[smallgraph]\sixvertex \draw (2) -- (3) -- (4) -- (5) -- (6) -- (1) -- (4);\end{tikzpicture}}
\addcase\ngraph{6.43}{\begin{tikzpicture}[smallgraph]\sixvertex \draw (2) -- (3) -- (4) -- (5) -- (6) -- (1) -- (5);\end{tikzpicture}}
\addcase\ngraph{6.44}{\begin{tikzpicture}[smallgraph]\sixvertex \draw (2) -- (1) -- (6) -- (5) -- (4) -- (3); \draw (1) -- (4);\end{tikzpicture}}
\addcase\ngraph{6.45}{\begin{tikzpicture}[smallgraph]\sixvertex \draw (2) -- (1) -- (6) -- (5) -- (4) -- (1) -- (3);\end{tikzpicture}}
\addcase\ngraph{6.46}{\begin{tikzpicture}[smallgraph]\sixvertex \draw (3) -- (2) -- (1) -- (6) -- (5) -- (1); \draw (4) -- (5);\end{tikzpicture}}
\addcase\ngraph{6.47}{\begin{tikzpicture}[smallgraph]\sixvertex \draw (3) -- (2) -- (1) -- (6) -- (5) -- (1) -- (4);\end{tikzpicture}}
\addcase\ngraph{6.48}{\begin{tikzpicture}[smallgraph]\sixvertex \draw (2) -- (1) -- (6) -- (5) -- (1) -- (3); \draw (1) -- (4);\end{tikzpicture}}
\addcase\ngraph{6.49}{\begin{tikzpicture}[smallgraph]\sixvertex \draw (3) -- (2) -- (1) -- (5) -- (4) -- (2); \draw (5) -- (6);\end{tikzpicture}}
\addcase\ngraph{6.50}{\begin{tikzpicture}[smallgraph]\sixvertex \draw (1) -- (5) -- (6) -- (1); \draw (2) -- (3) -- (4) -- (2);\end{tikzpicture}}
\addcase\ngraph{6.51}{\begin{tikzpicture}[smallgraph]\sixvertex \draw (6) -- (1) -- (2) -- (5) -- (1); \draw (2) -- (3); \draw (4) -- (5);\end{tikzpicture}}
\addcase\ngraph{6.52}{\begin{tikzpicture}[smallgraph]\sixvertex \draw (1) -- (4) -- (5) -- (1) -- (6) -- (5); \draw (2) -- (3);\end{tikzpicture}}
\addcase\ngraph{6.53}{\begin{tikzpicture}[smallgraph]\sixvertex \draw (1) -- (5) -- (6) -- (1) -- (2) -- (3); \draw (2) -- (4);\end{tikzpicture}}
\addcase\ngraph{6.54}{\begin{tikzpicture}[smallgraph]\sixvertex \draw (2) -- (1) -- (3); \draw (4) -- (5) -- (6) -- (1) -- (5);\end{tikzpicture}}
\addcase\ngraph{6.55}{\begin{tikzpicture}[smallgraph]\sixvertex \draw (1) -- (2) -- (4) -- (5) -- (6) -- (1) -- (5); \draw (2) -- (6);\end{tikzpicture}}
\addcase\ngraph{6.56}{\begin{tikzpicture}[smallgraph]\sixvertex \draw (1) -- (2) -- (4) -- (1) -- (6) -- (5) -- (1); \draw (4) -- (5);\end{tikzpicture}}
\addcase\ngraph{6.57}{\begin{tikzpicture}[smallgraph]\sixvertex \draw (3) -- (4) -- (5) -- (1) -- (2) -- (4) -- (1); \draw (2) -- (5);\end{tikzpicture}}
\addcase\ngraph{6.58}{\begin{tikzpicture}[smallgraph]\sixvertex \draw (2) -- (1) -- (6) -- (5) -- (4) -- (1) -- (5) -- (2);\end{tikzpicture}}
\addcase\ngraph{6.59}{\begin{tikzpicture}[smallgraph]\sixvertex \draw (2) -- (3) -- (4) -- (5) -- (2) -- (1) -- (6) -- (5);\end{tikzpicture}}
\addcase\ngraph{6.60}{\begin{tikzpicture}[smallgraph]\sixvertex \draw (2) -- (3) -- (4) -- (2) -- (1) -- (6) -- (5) -- (4);\end{tikzpicture}}
\addcase\ngraph{6.61}{\begin{tikzpicture}[smallgraph]\sixvertex \draw (6) -- (1) -- (3) -- (2) -- (6) -- (5) -- (4) -- (3);\end{tikzpicture}}
\addcase\ngraph{6.62}{\begin{tikzpicture}[smallgraph]\sixvertex \draw (3) -- (2) -- (1) -- (6) -- (5) -- (4) -- (1) -- (5);\end{tikzpicture}}
\addcase\ngraph{6.63}{\begin{tikzpicture}[smallgraph]\sixvertex \draw (3) -- (2) -- (4) -- (5) -- (6) -- (1) -- (5); \draw (1) -- (4);\end{tikzpicture}}
\addcase\ngraph{6.64}{\begin{tikzpicture}[smallgraph]\sixvertex \draw (3) -- (4) -- (5) -- (6) -- (1) -- (5) -- (2) -- (4);\end{tikzpicture}}
\addcase\ngraph{6.65}{\begin{tikzpicture}[smallgraph]\sixvertex \draw (6) -- (1) -- (2) -- (3); \draw (1) -- (5) -- (2) -- (4) -- (5);\end{tikzpicture}}
\addcase\ngraph{6.66}{\begin{tikzpicture}[smallgraph]\sixvertex \draw (3) -- (4) -- (5) -- (6) -- (1) -- (2) -- (4); \draw (1) -- (5);\end{tikzpicture}}
\addcase\ngraph{6.67}{\begin{tikzpicture}[smallgraph]\sixvertex \draw (4) -- (3) -- (2) -- (4) -- (1) -- (6) -- (5) -- (4);\end{tikzpicture}}
\addcase\ngraph{6.68}{\begin{tikzpicture}[smallgraph]\sixvertex \draw (6) -- (5) -- (4) -- (3) -- (1) -- (2) -- (4); \draw (1) -- (5);\end{tikzpicture}}
\addcase\ngraph{6.69}{\begin{tikzpicture}[smallgraph]\sixvertex \draw (1) -- (4) -- (5) -- (6) -- (1) -- (5); \draw (2) -- (3); \draw (4) -- (6);\end{tikzpicture}}
\addcase\ngraph{6.70}{\begin{tikzpicture}[smallgraph]\sixvertex \draw (3) -- (2) -- (1) -- (6) -- (5) -- (4) -- (1); \draw (2) -- (4);\end{tikzpicture}}
\addcase\ngraph{6.71}{\begin{tikzpicture}[smallgraph]\sixvertex \draw (6) -- (1) -- (2) -- (4) -- (5) -- (1) -- (3) -- (4);\end{tikzpicture}}
\addcase\ngraph{6.72}{\begin{tikzpicture}[smallgraph]\sixvertex \draw (2) -- (4) -- (3); \draw (4) -- (1) -- (6) -- (4) -- (5) -- (6);\end{tikzpicture}}
\addcase\ngraph{6.73}{\begin{tikzpicture}[smallgraph]\sixvertex \draw (2) -- (5) -- (1) -- (6) -- (5) -- (3) -- (4) -- (5);\end{tikzpicture}}
\addcase\ngraph{6.74}{\begin{tikzpicture}[smallgraph]\sixvertex \draw (3) -- (4) -- (5) -- (6) -- (1) -- (2) -- (4) -- (1);\end{tikzpicture}}
\addcase\ngraph{6.75}{\begin{tikzpicture}[smallgraph]\sixvertex \draw (6) -- (1) -- (2) -- (4) -- (3); \draw (1) -- (5) -- (4); \draw (2) -- (5);\end{tikzpicture}}
\addcase\ngraph{6.76}{\begin{tikzpicture}[smallgraph]\sixvertex \draw (6) -- (5) -- (1) -- (2) -- (5) -- (4) -- (2) -- (3);\end{tikzpicture}}
\addcase\ngraph{6.77}{\begin{tikzpicture}[smallgraph]\sixvertex \draw (5) -- (1) -- (6) -- (5) -- (4) -- (3) -- (2) -- (4);\end{tikzpicture}}
\addcase\ngraph{6.78}{\begin{tikzpicture}[smallgraph]\sixvertex \draw (3) -- (2) -- (1) -- (6) -- (5) -- (1); \draw (5) -- (2) -- (4);\end{tikzpicture}}
\addcase\ngraph{6.79}{\begin{tikzpicture}[smallgraph]\sixvertex \draw (4) -- (1) -- (6) -- (4) -- (5) -- (6); \draw (2) -- (3) -- (4) -- (2);\end{tikzpicture}}
\addcase\ngraph{6.80}{\begin{tikzpicture}[smallgraph]\sixvertex \draw (3) -- (4) -- (5) -- (6) -- (1) -- (2) -- (4) -- (1); \draw (4) -- (6);\end{tikzpicture}}
\addcase\ngraph{6.81}{\begin{tikzpicture}[smallgraph]\sixvertex \draw (3) -- (4) -- (5) -- (6) -- (1) -- (4) -- (2) -- (6) -- (4);\end{tikzpicture}}
\addcase\ngraph{6.82}{\begin{tikzpicture}[smallgraph]\sixvertex \draw (2) -- (4) -- (1) -- (6) -- (5) -- (4) -- (3); \draw (4) -- (6); \draw (1) -- (5);\end{tikzpicture}}
\addcase\ngraph{6.83}{\begin{tikzpicture}[smallgraph]\sixvertex \draw (1) -- (5) -- (6) -- (1) -- (2) -- (3) -- (4) -- (2); \draw (4) -- (5);\end{tikzpicture}}
\addcase\ngraph{6.84}{\begin{tikzpicture}[smallgraph]\sixvertex \draw (1) -- (2) -- (3) -- (4) -- (5) -- (6) -- (1) -- (3); \draw (2) -- (5);\end{tikzpicture}}
\addcase\ngraph{6.85}{\begin{tikzpicture}[smallgraph]\sixvertex \draw (1) -- (2) -- (3) -- (4) -- (5) -- (6) -- (1) -- (3); \draw (2) -- (6);\end{tikzpicture}}
\addcase\ngraph{6.86}{\begin{tikzpicture}[smallgraph]\sixvertex \draw (3) -- (4) -- (5) -- (6) -- (1) -- (2) -- (4) -- (6); \draw (1) -- (5);\end{tikzpicture}}
\addcase\ngraph{6.87}{\begin{tikzpicture}[smallgraph]\sixvertex \draw (1) -- (3) -- (5) -- (4) -- (3) -- (2) -- (6) -- (1); \draw (5) -- (6);\end{tikzpicture}}
\addcase\ngraph{6.88}{\begin{tikzpicture}[smallgraph]\sixvertex \draw (3) -- (4) -- (5) -- (6) -- (1) -- (2) -- (4) -- (1); \draw (2) -- (5);\end{tikzpicture}}
\addcase\ngraph{6.89}{\begin{tikzpicture}[smallgraph]\sixvertex \draw (3) -- (4) -- (5) -- (6) -- (1) -- (2) -- (4) -- (1) -- (5);\end{tikzpicture}}
\addcase\ngraph{6.90}{\begin{tikzpicture}[smallgraph]\sixvertex \draw (4) -- (1) -- (5) -- (6) -- (1) -- (2) -- (3) -- (4) -- (5);\end{tikzpicture}}
\addcase\ngraph{6.91}{\begin{tikzpicture}[smallgraph]\sixvertex \draw (3) -- (2) -- (1) -- (6) -- (5) -- (4) -- (1); \draw (2) -- (5); \draw (4) -- (6);\end{tikzpicture}}
\addcase\ngraph{6.92}{\begin{tikzpicture}[smallgraph]\sixvertex \draw (1) -- (4) -- (3) -- (2) -- (4) -- (5) -- (6) -- (1); \draw (1) -- (5);\end{tikzpicture}}
\addcase\ngraph{6.93}{\begin{tikzpicture}[smallgraph]\sixvertex \draw (6) -- (1) -- (4) -- (3) -- (6) -- (2) -- (4) -- (5) -- (6);\end{tikzpicture}}
\addcase\ngraph{6.94}{\begin{tikzpicture}[smallgraph]\sixvertex \draw (3) -- (1) -- (5) -- (6) -- (1) -- (2) -- (3) -- (4) -- (5);\end{tikzpicture}}
\addcase\ngraph{6.95}{\begin{tikzpicture}[smallgraph]\sixvertex \draw (3) -- (2) -- (1) -- (6) -- (5) -- (4) -- (1) -- (5); \draw (4) -- (6);\end{tikzpicture}}
\addcase\ngraph{6.96}{\begin{tikzpicture}[smallgraph]\sixvertex \draw (1) -- (2) -- (4) -- (5) -- (6) -- (1) -- (4); \draw (2) -- (5) -- (1);\end{tikzpicture}}
\addcase\ngraph{6.97}{\begin{tikzpicture}[smallgraph]\sixvertex \draw (1) -- (2) -- (4) -- (1) -- (5) -- (4); \draw (2) -- (6) -- (1); \draw (5) -- (6);\end{tikzpicture}}
\addcase\ngraph{6.98}{\begin{tikzpicture}[smallgraph]\sixvertex \draw (3) -- (2) -- (1) -- (6) -- (5) -- (4) -- (1) -- (5); \draw (2) -- (4);\end{tikzpicture}}
\addcase\ngraph{6.99}{\begin{tikzpicture}[smallgraph]\sixvertex \draw (6) -- (1) -- (3) -- (2) -- (6) -- (3) -- (4) -- (5) -- (6);\end{tikzpicture}}
\addcase\ngraph{6.100}{\begin{tikzpicture}[smallgraph]\sixvertex \draw (6) -- (5) -- (4) -- (3); \draw (1) -- (5) -- (2) -- (1) -- (4) -- (2);\end{tikzpicture}}
\addcase\ngraph{6.101}{\begin{tikzpicture}[smallgraph]\sixvertex \draw (1) -- (2) -- (3) -- (4) -- (5) -- (6) -- (1) -- (4); \draw (2) -- (5);\end{tikzpicture}}
\addcase\ngraph{6.102}{\begin{tikzpicture}[smallgraph]\sixvertex \draw (3) -- (4) -- (5) -- (6) -- (1) -- (2) -- (5) -- (1) -- (4);\end{tikzpicture}}
\addcase\ngraph{6.103}{\begin{tikzpicture}[smallgraph]\sixvertex \draw (6) -- (1) -- (3) -- (2) -- (6) -- (3) -- (4) -- (6) -- (5) -- (3);\end{tikzpicture}}
\addcase\ngraph{6.104}{\begin{tikzpicture}[smallgraph]\sixvertex \draw (1) -- (2) -- (3) -- (4) -- (5) -- (6) -- (1) -- (3); \draw (4) -- (1) -- (5);\end{tikzpicture}}
\addcase\ngraph{6.105}{\begin{tikzpicture}[smallgraph]\sixvertex \draw (3) -- (4) -- (5) -- (6) -- (1) -- (2) -- (4) -- (1); \draw (2) -- (5); \draw (4) -- (6);\end{tikzpicture}}
\addcase\ngraph{6.106}{\begin{tikzpicture}[smallgraph]\sixvertex \draw (3) -- (4) -- (5) -- (6) -- (1) -- (2) -- (4) -- (1) -- (5); \draw (4) -- (6);\end{tikzpicture}}
\addcase\ngraph{6.107}{\begin{tikzpicture}[smallgraph]\sixvertex \draw (6) -- (1) -- (3) -- (2) -- (6) -- (3) -- (4) -- (5) -- (6); \draw (3) -- (5);\end{tikzpicture}}
\addcase\ngraph{6.108}{\begin{tikzpicture}[smallgraph]\sixvertex \draw (4) -- (2) -- (3) -- (4) -- (1) -- (6) -- (4) -- (5) -- (6); \draw (1) -- (5);\end{tikzpicture}}
\addcase\ngraph{6.109}{\begin{tikzpicture}[smallgraph]\sixvertex \draw (1) -- (2) -- (3) -- (4) -- (5) -- (6) -- (1) -- (5); \draw (2) -- (4); \draw (3) -- (6);\end{tikzpicture}}
\addcase\ngraph{6.110}{\begin{tikzpicture}[smallgraph]\sixvertex \draw (2) -- (3) -- (4) -- (5) -- (6) -- (1) -- (2) -- (6); \draw (5) -- (1) -- (4);\end{tikzpicture}}
\addcase\ngraph{6.111}{\begin{tikzpicture}[smallgraph]\sixvertex \draw (1) -- (2) -- (3) -- (4) -- (5) -- (6) -- (1) -- (5); \draw (6) -- (2) -- (4);\end{tikzpicture}}
\addcase\ngraph{6.112}{\begin{tikzpicture}[smallgraph]\sixvertex \draw (5) -- (6) -- (1) -- (2) -- (3) -- (4) -- (5) -- (1) -- (4); \draw (3) -- (6);\end{tikzpicture}}
\addcase\ngraph{6.113}{\begin{tikzpicture}[smallgraph]\sixvertex \draw (1) -- (2) -- (3) -- (4) -- (5) -- (6) -- (1) -- (5) -- (2) -- (4);\end{tikzpicture}}
\addcase\ngraph{6.114}{\begin{tikzpicture}[smallgraph]\sixvertex \draw (3) -- (4) -- (5) -- (6) -- (1) -- (2) -- (6); \draw (1) -- (5) -- (2) -- (4);\end{tikzpicture}}
\addcase\ngraph{6.115}{\begin{tikzpicture}[smallgraph]\sixvertex \draw (1) -- (2) -- (3) -- (4) -- (5) -- (6) -- (1) -- (5) -- (3) -- (6);\end{tikzpicture}}
\addcase\ngraph{6.116}{\begin{tikzpicture}[smallgraph]\sixvertex \draw (3) -- (4) -- (5) -- (6) -- (1) -- (2) -- (4) -- (6); \draw (1) -- (5) -- (2);\end{tikzpicture}}
\addcase\ngraph{6.117}{\begin{tikzpicture}[smallgraph]\sixvertex \draw (1) -- (2) -- (4) -- (5) -- (6) -- (1) -- (5) -- (2) -- (6) -- (4);\end{tikzpicture}}
\addcase\ngraph{6.118}{\begin{tikzpicture}[smallgraph]\sixvertex \draw (1) -- (2) -- (3) -- (4) -- (5) -- (6) -- (1) -- (5) -- (2) -- (6);\end{tikzpicture}}
\addcase\ngraph{6.119}{\begin{tikzpicture}[smallgraph]\sixvertex \draw (1) -- (2) -- (3) -- (4) -- (5) -- (6) -- (1) -- (4); \draw (2) -- (5); \draw (3) -- (6);\end{tikzpicture}}
\addcase\ngraph{6.120}{\begin{tikzpicture}[smallgraph]\sixvertex \draw (2) -- (3) -- (4) -- (5) -- (6) -- (1) -- (2) -- (4) -- (6) -- (2);\end{tikzpicture}}
\addcase\ngraph{6.121}{\begin{tikzpicture}[smallgraph]\sixvertex \draw (4) -- (5) -- (6) -- (1) -- (3) -- (5); \draw (2) -- (3) -- (4) -- (6) -- (2);\end{tikzpicture}}
\addcase\ngraph{6.122}{\begin{tikzpicture}[smallgraph]\sixvertex \draw (1) -- (2) -- (3) -- (4) -- (5) -- (6) -- (1) -- (4) -- (2) -- (5);\end{tikzpicture}}
\addcase\ngraph{6.123}{\begin{tikzpicture}[smallgraph]\sixvertex \draw (3) -- (4) -- (5) -- (6) -- (1) -- (2) -- (4) -- (1) -- (5) -- (2);\end{tikzpicture}}
\addcase\ngraph{6.124}{\begin{tikzpicture}[smallgraph]\sixvertex \draw (1) -- (2) -- (3) -- (4) -- (5) -- (6) -- (1) -- (4) -- (2) -- (6) -- (4);\end{tikzpicture}}
\addcase\ngraph{6.125}{\begin{tikzpicture}[smallgraph]\sixvertex \draw (1) -- (2) -- (3) -- (4) -- (5) -- (6) -- (1) -- (4) -- (2); \draw (3) -- (5); \draw (4) -- (6);\end{tikzpicture}}
\addcase\ngraph{6.126}{\begin{tikzpicture}[smallgraph]\sixvertex \draw (1) -- (2) -- (3) -- (4) -- (5) -- (6) -- (1) -- (4) -- (2) -- (5); \draw (4) -- (6);\end{tikzpicture}}
\addcase\ngraph{6.127}{\begin{tikzpicture}[smallgraph]\sixvertex \draw (6) -- (1) -- (2) -- (3) -- (4) -- (5) -- (6) -- (4) -- (1) -- (5); \draw (2) -- (4);\end{tikzpicture}}
\addcase\ngraph{6.128}{\begin{tikzpicture}[smallgraph]\sixvertex \draw (6) -- (1) -- (3) -- (2) -- (6) -- (3) -- (4) -- (5) -- (6) -- (4); \draw (3) -- (5);\end{tikzpicture}}
\addcase\ngraph{6.129}{\begin{tikzpicture}[smallgraph]\sixvertex \draw (3) -- (4) -- (5) -- (6) -- (1) -- (2) -- (4) -- (1) -- (5) -- (2); \draw (4) -- (6);\end{tikzpicture}}
\addcase\ngraph{6.130}{\begin{tikzpicture}[smallgraph]\sixvertex \draw (1) -- (2) -- (3) -- (4) -- (5) -- (6) -- (1) -- (3) -- (5); \draw (2) -- (4) -- (6);\end{tikzpicture}}
\addcase\ngraph{6.131}{\begin{tikzpicture}[smallgraph]\sixvertex \draw (1) -- (2) -- (3) -- (4) -- (5) -- (6) -- (1) -- (4) -- (6) -- (3) -- (5);\end{tikzpicture}}
\addcase\ngraph{6.132}{\begin{tikzpicture}[smallgraph]\sixvertex \draw (3) -- (4) -- (5) -- (6) -- (1) -- (2) -- (4) -- (6) -- (2) -- (5) -- (1);\end{tikzpicture}}
\addcase\ngraph{6.133}{\begin{tikzpicture}[smallgraph]\sixvertex \draw (1) -- (2) -- (3) -- (4) -- (5) -- (6) -- (1) -- (3); \draw (2) -- (4) -- (6) -- (2);\end{tikzpicture}}
\addcase\ngraph{6.134}{\begin{tikzpicture}[smallgraph]\sixvertex \draw (1) -- (2) -- (3) -- (4) -- (5) -- (6) -- (1) -- (4) -- (2) -- (5) -- (1);\end{tikzpicture}}
\addcase\ngraph{6.135}{\begin{tikzpicture}[smallgraph]\sixvertex \draw (1) -- (2) -- (3) -- (4) -- (5) -- (6) -- (1) -- (4) -- (2) -- (5); \draw (3) -- (6);\end{tikzpicture}}
\addcase\ngraph{6.136}{\begin{tikzpicture}[smallgraph]\sixvertex \draw (1) -- (2) -- (4) -- (5) -- (6) -- (1) -- (4) -- (6) -- (2) -- (5) -- (1);\end{tikzpicture}}
\addcase\ngraph{6.137}{\begin{tikzpicture}[smallgraph]\sixvertex \draw (1) -- (2) -- (3) -- (4) -- (5) -- (6) -- (1) -- (3) -- (5); \draw (2) -- (6) -- (4);\end{tikzpicture}}
\addcase\ngraph{6.138}{\begin{tikzpicture}[smallgraph]\sixvertex \draw (1) -- (2) -- (3) -- (4) -- (5) -- (6) -- (1) -- (4) -- (2) -- (6) -- (3);\end{tikzpicture}}
\addcase\ngraph{6.139}{\begin{tikzpicture}[smallgraph]\sixvertex \draw (1) -- (3) -- (5) -- (6) -- (1) -- (4) -- (6) -- (2) -- (3) -- (4) -- (5); \draw (3) -- (6);\end{tikzpicture}}
\addcase\ngraph{6.140}{\begin{tikzpicture}[smallgraph]\sixvertex \draw (1) -- (2) -- (3) -- (4) -- (5) -- (6) -- (1) -- (3) -- (5); \draw (2) -- (6) -- (4); \draw (3) -- (6);\end{tikzpicture}}
\addcase\ngraph{6.141}{\begin{tikzpicture}[smallgraph]\sixvertex \draw (1) -- (2) -- (3) -- (4) -- (5) -- (6) -- (1) -- (3) -- (5) -- (2); \draw (4) -- (6) -- (3);\end{tikzpicture}}
\addcase\ngraph{6.142}{\begin{tikzpicture}[smallgraph]\sixvertex \draw (3) -- (4) -- (5) -- (6) -- (1) -- (2) -- (4) -- (6) -- (2) -- (5) -- (1) -- (4);\end{tikzpicture}}
\addcase\ngraph{6.143}{\begin{tikzpicture}[smallgraph]\sixvertex \draw (1) -- (3) -- (4) -- (5) -- (6) -- (1) -- (4); \draw (5) -- (1); \draw (5) -- (3) -- (2) -- (6) -- (3);\end{tikzpicture}}
\addcase\ngraph{6.144}{\begin{tikzpicture}[smallgraph]\sixvertex \draw (1) -- (2) -- (3) -- (4) -- (5) -- (6) -- (1) -- (4) -- (6) -- (2) -- (5); \draw (3) -- (6);\end{tikzpicture}}
\addcase\ngraph{6.145}{\begin{tikzpicture}[smallgraph]\sixvertex \draw (1) -- (2) -- (3) -- (4) -- (5) -- (6) -- (1) -- (4) -- (2) -- (5) -- (1); \draw (3) -- (6);\end{tikzpicture}}
\addcase\ngraph{6.146}{\begin{tikzpicture}[smallgraph]\sixvertex \draw (1) -- (2) -- (3) -- (4) -- (5) -- (6) -- (1) -- (3) -- (5) -- (2) -- (6) -- (4);\end{tikzpicture}}
\addcase\ngraph{6.147}{\begin{tikzpicture}[smallgraph]\sixvertex \draw (1) -- (3) -- (4) -- (5) -- (6) -- (1) -- (4) -- (6) -- (2) -- (3) -- (5) -- (1);\end{tikzpicture}}
\addcase\ngraph{6.148}{\begin{tikzpicture}[smallgraph]\sixvertex \draw (2) -- (3) -- (4) -- (5) -- (6) -- (1) -- (2) -- (6) -- (4) -- (1) -- (3) -- (5); \draw (3) -- (6);\end{tikzpicture}}
\addcase\ngraph{6.149}{\begin{tikzpicture}[smallgraph]\sixvertex \draw (1) -- (2) -- (3) -- (4) -- (5) -- (6) -- (1) -- (3) -- (5) -- (1); \draw (2) -- (5); \draw (3) -- (6); \draw (1) -- (4);\end{tikzpicture}}
\addcase\ngraph{6.150}{\begin{tikzpicture}[smallgraph]\sixvertex \draw (1) -- (2) -- (3) -- (4) -- (5) -- (6) -- (1) -- (3) -- (6) -- (4) -- (1) -- (5) -- (3);\end{tikzpicture}}
\addcase\ngraph{6.151}{\begin{tikzpicture}[smallgraph]\sixvertex \draw (1) -- (2) -- (3) -- (4) -- (5) -- (6) -- (1) -- (4) -- (2) -- (5) -- (3) -- (6) -- (4);\end{tikzpicture}}
\addcase\ngraph{6.152}{\begin{tikzpicture}[smallgraph]\sixvertex \draw (1) -- (2) -- (3) -- (4) -- (5) -- (6) -- (1) -- (3) -- (5) -- (1); \draw (2) -- (4) -- (6) -- (2);\end{tikzpicture}}
\addcase\ngraph{6.153}{\begin{tikzpicture}[smallgraph]\sixvertex \draw (1) -- (2) -- (3) -- (4) -- (5) -- (6) -- (1) -- (4) -- (2) -- (6) -- (4); \draw (2) -- (5) -- (3) -- (6);\end{tikzpicture}}
\addcase\ngraph{6.154}{\begin{tikzpicture}[smallgraph]\sixvertex \draw (1) -- (2) -- (3) -- (4) -- (5) -- (6) -- (1) -- (3) -- (5) -- (2) -- (6) -- (4) -- (1); \draw (3) -- (6);\end{tikzpicture}}
\addcase\ngraph{6.155}{\begin{tikzpicture}[smallgraph]\sixvertex \draw (4) -- (1) -- (2) -- (3) -- (4) -- (5) -- (6) -- (1) -- (3) -- (5) -- (2) -- (4) -- (6) -- (3); \draw (2) -- (6);\end{tikzpicture}}
\addcase\ngraph{6.156}{\begin{tikzpicture}[smallgraph]\sixvertex \draw (4) -- (1) -- (2) -- (3) -- (4) -- (5) -- (6) -- (1) -- (3) -- (5) -- (2) -- (4) -- (6) -- (3); \draw (2) -- (6); \draw (1) -- (5);\end{tikzpicture}}

\def\fl[#1]{\left\lfloor\frac{#1}{2}\right\rfloor}

%
%
%
 \newtheorem{theorem}{Theorem}[section]

 \theoremstyle{definition}
 
 \theoremstyle{remark}
 \newtheorem{remark}[theorem]{Remark}
 
 \numberwithin{equation}{section}

\setlength{\textwidth}{4.5in}
\setlength{\textheight}{7.125in}
\pagestyle{empty}
\setlength{\parskip}{.07in}

\begin{document}

%
%
%
%
%
%
%
%
%

\title{\bf On the Crossing Numbers of Cartesian Products of Small Graphs with Paths, Cycles and Stars}

%
\author{\Large Kieran Clancy \\
1284 South Road\\
Tonsley 5042\\
Australia \\
%
%
\and
\Large Michael Haythorpe \thanks{Corresponding author}\\
1284 South Road\\
Tonsley 5042\\
Australia\\
{\tt michael.haythorpe@flinders.edu.au}
\and
Alex Newcombe \\
1284 South Road\\
Tonsley 5042\\
Australia\\
{\tt alex.newcombe@flinders.edu.au}}
%
%
\date{February 27, 2019}

\maketitle

\begin{abstract}
There has been significant research dedicated towards computing the crossing numbers of families of graphs resulting from the Cartesian products of small graphs with arbitrarily large paths, cycles and stars. For graphs with four or fewer vertices, these have all been computed, but there are still various gaps for graphs with five or more vertices. We contribute to this field by determining the crossing numbers for fifteen such families.
\end{abstract}

\section{Introduction}
Consider a graph $G$ comprising vertices $V(G)$ and edges $E(G)$. A {\em drawing} of $G$ onto the plane is a mapping $D_G$ which maps vertices to distinct points and edges to continuous arcs. The image of an edge $e = \{u,v\}$ is a continuous arc between the points associated with $u$ and $v$ such that the interior of the arc does not contain any points associated with vertices. Without loss of generality, we call the points and arcs the `vertices' and `edges' of the drawing. The interiors of the edges are allowed to intersect at singleton points in such a way that each edge intersection is strictly a crossing between the edges, as opposed to the edges touching and then not crossing. For simplicity, we assume that no three edges intersect at the same point. These intersections form the {\em crossings} of the drawing and the number of crossings in $D$ is denoted by $cr_{D}(G)$. The {\em crossing number} of $G$, denoted $cr(G)$, is the minimum number of crossings over all possible drawings. The {\em crossing number problem} (CNP) is the problem of determining the crossing number of a graph, and is known to be NP-hard \cite{gareyjohnson1983}. The CNP is a notoriously difficult problem even for relatively small graphs; indeed, the crossing number of $K_{13}$ has still not been determined \cite{mcquillanetal2015}.

The Cartesian product of two graphs $G$ and $H$, denoted by $G \Box H$, is a graph with vertex set $V(G) \times V(H)$, such that an edge exists between vertices $(u,u')$ and $(v,v')$ if and only if either $u = v$ and $\{u',v'\} \in E(H)$, or $u' = v'$ and $\{u,v\} \in E(G)$. An example of the Cartesian product of two paths, $P_3 \Box P_4$, is displayed in Figure \ref{fig-p3boxp4}. Note that $P_n$ is the path on $n+1$ vertices.

\begin{figure}[h!]\begin{center}
\begin{tikzpicture}[largegraph,scale=0.35]
\foreach \n in {0,...,4}{
  \node (a\n) at (2*\n,6) {};
  \node (b\n) at (2*\n,4) {};
  \node (c\n) at (2*\n,2) {};
  \node (d\n) at (2*\n,0) {};
}
\foreach \n in {0,...,3}{
  \pgfmathtruncatemacro{\m}{\n + 1}
  \draw (a\n) -- (a\m);
  \draw (b\n) -- (b\m);
  \draw (c\n) -- (c\m);
  \draw (d\n) -- (d\m);
}
\foreach \n in {0,...,4}{
  \draw (a\n) -- (b\n);
  \draw (b\n) -- (c\n);
  \draw (c\n) -- (d\n);
}

\end{tikzpicture}\caption{The Cartesian product $P_3 \Box P_4$.\label{fig-p3boxp4}}\end{center}\end{figure}

One of the early results relating to crossing numbers is due to Beineke and Ringeisen \cite{beinekeringeisen1980} who, in 1980, considered families of graphs resulting from the Cartesian products of connected graphs on four vertices with arbitrarily large cycles. There are six connected graphs on four vertices, and with only one exception (the star $S_3$), they were successful in determining the crossing numbers for each resulting family. The one unsolved case was subsequently handled by Jendrol and \v{S}cerbov\'{a} \cite{jendrolscerbova1982} in 1982. A decade later in 1994, Kle\v{s}\v{c} \cite{klesc1994} extended this result by determining the crossing numbers of families resulting from the Cartesian products of each of the connected graphs on four vertices with arbitrarily large paths and stars. In the ensuing years, significant effort has gone into extending these results to include graphs on more vertices; in particular five and six vertices. The pioneering work in this area was by Kle\v{s}\v{c} and his various co-authors \cite{drazenska2011,drazenskaklesc2008,drazenskaklesc2011,klesc1991,klesc1994,klesc1995,klesc1996,klesc1999,klesc1999_2,klesc2001,klesc2001_2,klesc2002,klesc2005,klesc2009,klesckocurova2007,klesckravecova2008,klesckravecova2012,klescetal2014,klescpetrillova2013,klescpetrillova2013_2,klescetal2017,klescetal1996,klescschrotter2013}
 who have spent the last three decades handling these cases, often on a graph-by-graph basis, requiring ad-hoc proofs that exploit the specific graph structure of the graphs in question. In the last fifteen years, a large number of other researchers have also contributed to this field. However, communication between the various researchers in this area has been poor, and it is has not been uncommon for multiple researchers to publish identical results.

To address this issue, a dynamic survey \cite{survey} on graphs with known crossing numbers was recently produced, which included tables of all known results of crossing numbers of families resulting from Cartesian products of small graphs with paths, cycles and stars. We reproduce the tables for crossing numbers of Cartesian products involving graphs on six vertices here. They are separated into Cartesian products involving paths (Table \ref{tab-6vcart_path}), cycles (Table \ref{tab-6vcart_cycle}) and stars (Table \ref{tab-6vcart_star}). In Tables \ref{tab-6vcart_path}--\ref{tab-6vcart_star}, only those graphs for which results have been determined are included. The graph indices are taken from Harary \cite{harary1969}, and an illustration of each graph on six vertices, as well as citations for each of the results in Tables \ref{tab-6vcart_path}--\ref{tab-6vcart_star} may be found in \cite{survey}. Note that, up to isomorphism, there are 156 graphs on six vertices, which includes 112 connected graphs.

For completeness, the tables include results determined in this paper and we highlight these indices in boldface. Additionally, note that some of the results are marked with asterisks. This indicates that the corresponding results appeared in journals which do not have adequate peer review processes. Hence, it would be valuable if these results could be re-proved in a journal which is fully peer reviewed. Indeed, in the present work, we provide a proof for one such asterisked result from \cite{su1}, namely $G^6_{120} \Box P_n$.

Proving that a particular graph family has crossing number equal to a particular function is usually achieved as follows. First, an upper bound for the crossing number is determined by providing a drawing method for members of that family which realises the proposed number of crossings. This is then shown to coincide with a lower bound, which is usually determined by some form of inductive argument. The latter typically takes much more work than the former. However, in some cases, a lower bound can be easily determined. For instance, consider $G^6_{46}$ and $G^6_{60}$, which are displayed in Table \ref{tab-6vcart_path}. It is clear that the former is a subgraph of the latter. Then, for any graph $H$, it follows from the definition of the Cartesian product that $G^6_{46} \Box H$ will be a subgraph of $G^6_{60} \Box H$. Thus, any lower bound for the crossing number of the former also provides a lower bound for the crossing number of the latter.

Furthermore, it is also clear that $G^6_{46}$ contains a subgraph $F$ consisting of a triangle with one pendant vertex attached. Then, any lower bound for $cr(F \Box H)$ also serves as a lower bound for $cr(G^6_{46} \Box H)$. Beineke and Ringeisen \cite{beinekeringeisen1980} proved that $cr(F \Box P_n) = n-1$, and, thus, a corollary of the above arguments implies that $cr(G^6_{60} \Box P_n) \geq cr(G^6_{46} \Box P_n) \geq n-1$. Hence, simply providing a drawing which establishes that $cr(G^6_{60} \Box P_n) \leq n-1$ is sufficient to decide the cases for both $G^6_{46} \Box P_n$ and $G^6_{60} \Box P_n$; indeed, this exact argument was used in Kle\v{s}\v{c} and Petrillov\'{a} \cite{klescpetrillova2013_2} to determine the crossing number of $G^6_{46} \Box P_n$. Of course, this kind of approach is only useful when the upper bound coincides with an established lower bound for a subgraph.

In what follows, we use approaches similar to the previous paragraph to determine the crossing number for fifteen additional families of graphs. Although the arguments are not complicated, the extensive research into filling in the gaps of Tables \ref{tab-6vcart_path} -- \ref{tab-6vcart_star}, which continues to this day, indicates the interest in this area; despite all of that research, these results have been hitherto undiscovered. We are in a unique position to present these simple arguments for two reasons. First, we are able to take advantage of the recently produced dynamic survey \cite{survey} that gathers, for the first time, all known published results into one place, so that they can all be simultaneously drawn upon to provide good lower bounds. Second, we are also able to take advantage of the recently developed crossing minimisation heuristic, QuickCross \cite{quickcross}, to aid us in finding good upper bounds.

\begin{table}[htbp]\begin{center}\scalebox{0.85}{$\begin{array}{|c|c|c||c|c|c||c|c|c|}\hline
\rule{0pt}{2.3ex} i & G^6_i 	& cr(G^6_i \Box P_n) 	&i & G^6_i 	& cr(G^6_i \Box P_n)  &i & G^6_i 	& cr(G^6_i \Box P_n)  	\\
\hline 25&\ctikz{\ngraph{6.25}}  	& 0	    	& 60& \ctikz{\ngraph{6.60}}  	& n - 1     & 89                & \ctikz{\ngraph{6.89}}  	& 3n - 3 	    \\
\hline 26&\ctikz{\ngraph{6.26}}  	& n-1    	& 61& \ctikz{\ngraph{6.61}}  	& 2n        &\text{{\bf 90}}    & \ctikz{\ngraph{6.90}}  	& 3n - 3 		\\        	
\hline 27&\ctikz{\ngraph{6.27}}  	& 2n - 2  	& 64& \ctikz{\ngraph{6.64}}  	& 2n - 2    &91                 & \ctikz{\ngraph{6.91}}  	& 3n - 1 		\\        	
\hline 28&\ctikz{\ngraph{6.28}}  	& n - 1   	& 65& \ctikz{\ngraph{6.65}}  	& 3n - 3    &93                 &\ctikz{\ngraph{6.93}}  	& \unv{4n} 		\\        	
\hline 29&\ctikz{\ngraph{6.29}}  	& 2n - 2  	& 66& \ctikz{\ngraph{6.66}}  	& 2n - 2    &94                 &\ctikz{\ngraph{6.94}}  	& 2n - 2 		\\        	
\hline 31&\ctikz{\ngraph{6.31}}  	& 4n - 4  	& 68& \ctikz{\ngraph{6.68}}  	& 3n - 1    &103                &\ctikz{\ngraph{6.103}} 	& 6n - 2 		\\        	
\hline 40&\ctikz{\ngraph{6.40}}  	& 0	  		& 70& \ctikz{\ngraph{6.70}}  	& 3n - 3    &104                &\ctikz{\ngraph{6.104}} 	& 4n - 4 		\\        	
\hline 41&\ctikz{\ngraph{6.41}}  	& n - 1  	& 71& \ctikz{\ngraph{6.71}}  	& 3n - 1    &109                &\ctikz{\ngraph{6.109}} 	& \unv{4n}  	\\        	
\hline 42&\ctikz{\ngraph{6.42}}  	& 2n - 4  	& 72& \ctikz{\ngraph{6.72}}  	& 4n - 4    &111                &\ctikz{\ngraph{6.111}} 	& 3n - 1 		\\        	
\hline 43&\ctikz{\ngraph{6.43}}  	& n - 1   	& 73& \ctikz{\ngraph{6.73}}  	& 4n - 4    &113                &\ctikz{\ngraph{6.113}} 	& 4n - 4  		\\        	
\hline 44&\ctikz{\ngraph{6.44}}  	& 2n - 2 	& 74& \ctikz{\ngraph{6.74}}  	& 2n - 2    &119                &\ctikz{\ngraph{6.119}} 	& \unv{7n - 1} 	\\        	
\hline 45&\ctikz{\ngraph{6.45}}  	& 2n - 2  	& 75& \ctikz{\ngraph{6.75}}  	& 2n        &\text{{\bf120}}    &\ctikz{\ngraph{6.120}} 	& 3n - 3        \\        	
\hline 46&\ctikz{\ngraph{6.46}}  	& n - 1   	& 77& \ctikz{\ngraph{6.77}}  	& 2n - 2    &121                &\ctikz{\ngraph{6.121}} 	& \unv{4n}     	\\
\hline 47&\ctikz{\ngraph{6.47}}	    & 2n - 2 	& 79& \ctikz{\ngraph{6.79}}  	& 4n - 4    &125                &\ctikz{\ngraph{6.125}} 	& 5n - 3  		\\
\hline 48&\ctikz{\ngraph{6.48}}	    & 4n - 4  	& 80& \ctikz{\ngraph{6.80}}  	& 4n - 4    &146                &\ctikz{\ngraph{6.146}} 	& \unv{5n - 1} 	\\
\hline 51&\ctikz{\ngraph{6.51}}	    & 3n - 3 	& 83& \ctikz{\ngraph{6.83}}  	& 2n - 2    &152                &\ctikz{\ngraph{6.152}} 	& \unv{6n}   	\\
\hline 53&\ctikz{\ngraph{6.53}}	    & 2n - 2 	& 85& \ctikz{\ngraph{6.85}}  	& 2n        &154                &\ctikz{\ngraph{6.154}} 	& 9n - 1 		\\
\hline 54&\ctikz{\ngraph{6.54}}	    & 2n - 2 	& 86& \ctikz{\ngraph{6.86}}  	& 3n - 1    &155                &\ctikz{\ngraph{6.155}} 	& \unv{12n}    	\\
\hline 59&\ctikz{\ngraph{6.59}}	    & 2n - 2  	& 87& \ctikz{\ngraph{6.87}}  	& 3n - 1    &156                &\ctikz{\ngraph{6.156}} 	& 15n + 3		\\
\hline
\end{array}$}\caption{Known crossing numbers of Cartesian products of graphs on six vertices with paths. All results are for $n \geq 1$ and boldface indices are results derived in this paper.\label{tab-6vcart_path}}\end{center}\end{table}

\begin{table}[htbp]\begin{center}\scalebox{0.825}{$\begin{array}{|c|c|cc||c|c|cc|}
\hline\rule{0pt}{2.3ex} i & G^6_i & cr(G^6_i \Box C_n) & \mbox{(small cases)} & i & G^6_i & cr(G^6_i \Box C_n) & \mbox{(small cases)} \\
\hline 25                 & \ctikz{\ngraph{6.25}}  & \begin{array}{rl}0  	& \end{array}              &                                                                   &\text{{\bf64}}     & \ctikz{\ngraph{6.64}}  & \begin{array}{rl}2n    & (n \geq 6)\end{array}    & \begin{array}{rl}6  & (n=3)\\8  & (n=4)\\10  & (n=5)\end{array}    \\
\hline 40                 & \ctikz{\ngraph{6.40}}  & \begin{array}{rl}4n    & (n \geq 6)\end{array}    & \begin{array}{rl} 6  & (n=3)\\12  & (n=4)\\18  & (n=5)\end{array} &\text{{\bf66}}     & \ctikz{\ngraph{6.66}}  & \begin{array}{rl}3n    & (n \geq 5)\end{array}    & \begin{array}{rl} 7  & (n=3)\\12  & (n=4)\end{array}               \\	
\hline 41                 & \ctikz{\ngraph{6.41}}  & \begin{array}{rl}3n    & (n \geq 5)\end{array}    & \begin{array}{rl} 5  & (n=3)\\10  & (n=4)\end{array}              &67                 & \ctikz{\ngraph{6.67}}  & \begin{array}{rl}3n    & (n \geq 4)\end{array}    & \begin{array}{rl} \\7  & (n=3)\\  & \end{array}                    \\
\hline 42                 & \ctikz{\ngraph{6.42}}  & \begin{array}{rl}2n    & (n \geq 4)\end{array}    & \begin{array}{rl} 4  & (n=3)\end{array}                           &\text{{\bf70}}     & \ctikz{\ngraph{6.70}}  & \begin{array}{rl}3n    & (n \geq 5)\end{array}    & \begin{array}{rl} 7 & (n=3)\\12  & (n=4)\end{array}                \\
\hline 43                 & \ctikz{\ngraph{6.43}}  & \begin{array}{rl}\;\;n & (n \geq 3)\end{array}    &                                                                   &\text{{\bf75}}     & \ctikz{\ngraph{6.75}}  & \begin{array}{rl}2n    & (n \geq 4)\end{array}    & \begin{array}{rl} 6  & (n=3)\end{array}                            \\
\hline 44                 & \ctikz{\ngraph{6.44}}  & \begin{array}{rl}2n    & (n \geq 4)\end{array}    & \begin{array}{rl} 4  & (n=3)\end{array}                           &\text{{\bf77}}     & \ctikz{\ngraph{6.77}}  & \begin{array}{rl}2n    & (n \geq 6)\end{array}    & \begin{array}{rl} 6 & (n=3)\\8  & (n=4)\\10  & (n=5)\end{array}    \\
\hline 46                 & \ctikz{\ngraph{6.46}}  & \begin{array}{rl}\;\;n & (n \geq 3)\end{array}    &                                                                   &78                 & \ctikz{\ngraph{6.78}}  & \begin{array}{rl}3n    & (n \geq 6)\end{array}    & \begin{array}{rl} 7  & (n=3)\\10  & (n=4)\\14  & (n=5)\end{array}  \\
\hline 47                 & \ctikz{\ngraph{6.47}}  & \begin{array}{rl}2n    & (n \geq 6)\end{array}    & \begin{array}{rl} 4  & (n=3)\\6  & (n=4)\\9  & (n=5)\end{array}   &\text{{\bf83}}     & \ctikz{\ngraph{6.83}}  & \begin{array}{rl}4n    & (n \geq 6)\end{array}    & \begin{array}{rl} 10  & (n=3)\\16  & (n=4)\\20  & (n=5)\end{array} \\
\hline 49                 & \ctikz{\ngraph{6.49}}  & \begin{array}{rl}2n    & (n \geq 4)\end{array}    & \begin{array}{rl} 4  & (n=3)\end{array}                           &\text{{\bf90}}     & \ctikz{\ngraph{6.90}}  & \begin{array}{rl}4n    & (n \geq 6)\end{array}    & \begin{array}{rl} 11 & (n=3)\\16  & (n=4)\\20  & (n=5)\end{array}  \\
\hline 53                 & \ctikz{\ngraph{6.53}}  & \begin{array}{rl}2n    & (n \geq 6)\end{array}    & \begin{array}{rl} 4  & (n=3)\\6  & (n=4)\\9  & (n=5)\end{array}   &\text{{\bf92}}     & \ctikz{\ngraph{6.92}}  & \begin{array}{rl}3n    & (n \geq 4)\end{array}    & \begin{array}{rl} 9  & (n=3)\end{array}                            \\
\hline 54                 & \ctikz{\ngraph{6.54}}  & \begin{array}{rl}2n    & (n \geq 6)\end{array}    & \begin{array}{rl} 4  & (n=3)\\6  & (n=4)\\9  & (n=5)\end{array}   &\text{{\bf98}}     & \ctikz{\ngraph{6.98}}  & \begin{array}{rl}3n    & (n \geq 5)\end{array}    & \begin{array}{rl} 9  & (n=3)\\12 & (n=4)\end{array}                \\
\hline \text{{\bf59}}     & \ctikz{\ngraph{6.59}}  & \begin{array}{rl}4n    & (n \geq 6)\end{array}    & \begin{array}{rl} 8  & (n=3)\\16  & (n=4)\\20  & (n=5)\end{array} &113                & \ctikz{\ngraph{6.113}} & \begin{array}{rl}4n    & (n \geq 3)\end{array}    &                                                                    \\
\hline \text{{\bf60}}     & \ctikz{\ngraph{6.60}}  & \begin{array}{rl}4n    & (n \geq 6)\end{array}    & \begin{array}{rl} 8  & (n=3)\\16  & (n=4)\\20  & (n=5)\end{array} &156                & \ctikz{\ngraph{6.156}} & \begin{array}{rl}18n   & (n \geq 3)\end{array}    &                                                                    \\
\hline \text{{\bf63}}     & \ctikz{\ngraph{6.63}}  & \begin{array}{rl}2n    & (n \geq 4)\end{array}    & \begin{array}{rl} 6  & (n=3)\end{array}                           &		            &						 &						 \\
\hline\end{array}$}\caption{Crossing numbers of Cartesian products of graphs on six vertices with cycles. Boldface indices are results derived in this paper\label{tab-6vcart_cycle}}\end{center}\end{table}

\begin{table}[htbp]\begin{center}\scalebox{0.85}{$\begin{array}{|c|c|c||c|c|c|}
\hline\rule{0pt}{2.3ex} i 	               & G^6_i 			        & cr(G^6_i \Box S_n)  		    & i 	& G^6_i 		         & cr(G^6_i \Box S_n) 			        \\
\hline\rule{0pt}{2.3ex}25                  & \ctikz{\ngraph{6.25}}  & 4\fl[n]\fl[n-1]     		    &77     & \ctikz{\ngraph{6.77}}  & 4\fl[n]\fl[n-1] + 2\fl[n]     		\\
\hline \rule{0pt}{2.3ex}26                 & \ctikz{\ngraph{6.26}}  & 4\fl[n]\fl[n-1] + \fl[n]   	&79     & \ctikz{\ngraph{6.79}}  & 6\fl[n]\fl[n-1] + 4\fl[n]      		\\
\hline \rule{0pt}{2.3ex}27                 & \ctikz{\ngraph{6.27}}  & 5\fl[n]\fl[n-1] + 2\fl[n]  	&80     & \ctikz{\ngraph{6.80}}  & 6\fl[n]\fl[n-1] + 4\fl[n]        	\\
\hline \rule{0pt}{2.3ex}28                 & \ctikz{\ngraph{6.28}}  & 4\fl[n]\fl[n-1] + \fl[n]   	&85     & \ctikz{\ngraph{6.85}}  & \unv{6\fl[n]\fl[n-1] + 2n}       	\\
\hline \rule{0pt}{2.3ex}29                 & \ctikz{\ngraph{6.29}}  & 4\fl[n]\fl[n-1] + 2\fl[n]  	&93     & \ctikz{\ngraph{6.93}}  & \unv{6\fl[n]\fl[n-1] + 4n}       	\\
\hline \rule{0pt}{2.3ex}31                 & \ctikz{\ngraph{6.31}}  & 6\fl[n]\fl[n-1] + 4\fl[n]  	&94     & \ctikz{\ngraph{6.94}}  & 6\fl[n]\fl[n-1] + 2\fl[n]   			\\
\hline \rule{0pt}{2.3ex}43                 & \ctikz{\ngraph{6.43}}  & 4\fl[n]\fl[n-1] + \fl[n]  	&104    & \ctikz{\ngraph{6.104}} & 6\fl[n]\fl[n-1] + 4\fl[n]      		\\
\hline 47     		                       & \ctikz{\ngraph{6.47}}  & 5\fl[n]\fl[n-1] + 2\fl[n]  	&111    & \ctikz{\ngraph{6.111}} & \unv{6\fl[n]\fl[n-1] + 2\fl[n] + 2n }\\
\hline \rule{0pt}{2.3ex}48                 & \ctikz{\ngraph{6.48}}  & 6\fl[n]\fl[n-1] + 4\fl[n]  	&120    & \ctikz{\ngraph{6.120}} & \unv{6\fl[n]\fl[n-1] + 3\fl[n]}  	\\
\hline 53     			                   & \ctikz{\ngraph{6.53}}  &4\fl[n]\fl[n-1] + 2\fl[n]   	&124    & \ctikz{\ngraph{6.124}} & \unv{6\fl[n]\fl[n-1] + 2n + 3\fl[n]} \\
\hline \rule{0pt}{2.3ex}59                 & \ctikz{\ngraph{6.59}}  & 6 \fl[n]\fl[n-1] + 2\fl[n] 	&125    & \ctikz{\ngraph{6.125}} & 6\fl[n]\fl[n-1] + 3\fl[n] + 2n   	\\
\hline 61     			                   & \ctikz{\ngraph{6.72}}  & \unv{6\fl[n]\fl[n-1] + 2n} 	&130    & \ctikz{\ngraph{6.130}} & \unv{6\fl[n]\fl[n-1] + 4n }         	\\
\hline \rule{0pt}{2.3ex}\text{\bf{62}}     & \ctikz{\ngraph{6.62}}  & 5\fl[n]\fl[n-1] + 2\fl[n]  	&137    & \ctikz{\ngraph{6.137}} & \unv{6\fl[n]\fl[n-1] + 4n }       	\\
\hline \rule{0pt}{2.3ex}72                 & \ctikz{\ngraph{6.72}}  & 6\fl[n]\fl[n-1] + 4\fl[n] 	&152    & \ctikz{\ngraph{6.152}} & 6\fl[n]\fl[n-1] + 6n        			\\
\hline \rule{0pt}{2.3ex}73                 & \ctikz{\ngraph{6.73}}  & 6\fl[n]\fl[n-1] + 4\fl[n]  	&	    &				         &						                \\
\hline
\end{array}$}\caption{Crossing numbers of Cartesian products of graphs on six vertices with stars. All results are for $n \geq 1$ and boldface indices are results derived in this paper.\label{tab-6vcart_star}}\end{center}\end{table}


\section{New Results}

%

In this section we determine the crossing numbers of the Cartesian product of one of the graphs displayed in Figure \ref{fig-fifteen}, with various arbitrarily large paths, cycles and stars.

The upcoming proofs are laid out as follows. In Theorem \ref{thm-paths} the crossing numbers of $G^6_{90} \Box P_n$ and $G^6_{120} \Box P_n$ are determined. In Theorems \ref{thm-59608390Cn} and \ref{thm-cycles} the crossing numbers of the Cartesian products of various graphs in Figure \ref{fig-fifteen} with cycles are determined. In Theorems \ref{thm-59608390Cn} and \ref{thm-cycles}, the results are only proved for sufficiently large cycles, and so the remaining cases involving small cycles are handled in Remark \ref{rem-small}. Finally, in Theorem \ref{thm-stars} the crossing number of $G^6_{62} \Box S_n$ is determined. In all cases the lower bounds are obtained from previously published results. In Theorem \ref{thm-59608390Cn} the upper bounds are also obtained from previously published results, and for the other theorems they are established by figures which show drawing methods for each Cartesian product considered.

\begin{figure}[htbp!]\begin{tabular}{ccccccc}
\ctikz{\ngraph{6.59}} & \ctikz{\ngraph{6.60}} & \ctikz{\ngraph{6.62}} & \ctikz{\ngraph{6.63}} & \ctikz{\ngraph{6.64}} & \ctikz{\ngraph{6.66}} & \ctikz{\ngraph{6.70}}  \\
{\small $G^6_{59}$} & {\small $G^6_{60}$} & {\small $G^6_{62}$} & {\small $G^6_{63}$} & {\small $G^6_{64}$} & {\small $G^6_{66}$} & {\small $G^6_{70}$}  \\
\ctikz{\ngraph{6.75}} & \ctikz{\ngraph{6.77}} & \ctikz{\ngraph{6.83}} & \ctikz{\ngraph{6.90}} & \ctikz{\ngraph{6.92}} & \ctikz{\ngraph{6.98}} & \ctikz{\ngraph{6.120}} \\
{\small $G^6_{75}$} & {\small $G^6_{77}$} & {\small $G^6_{83}$} & {\small $G^6_{90}$} & {\small $G^6_{92}$} & {\small $G^6_{98}$} & {\small $G^6_{120}$} \\
\end{tabular}\caption{We will derive new results for Cartesian products involving these fourteen graphs.\label{fig-fifteen}}\end{figure}

\begin{theorem}Consider the path graph $P_n$ for $n \geq 1$. Then, $cr(G^6_{90} \Box P_n) = cr(G^6_{120} \Box P_n) = 3n-3$.\label{thm-paths}\end{theorem}

\begin{proof}Consider the graph $G^6_{51}$ which is displayed in Table \ref{tab-6vcart_path}. The crossing number $cr(G^6_{51} \Box P_n) = 3n-3$ for $n \geq 1$ was determined by Kle\v{s}\v{c} et al \cite{klescetal2014}. It is clear that $G^6_{51}$ is a subgraph of both $G^6_{90}$ and $G^6_{120}$. Hence, we have $cr(G^6_{90} \Box P_n) \geq 3n-3$ for $n \geq 1$, and $cr(G^6_{120} \Box P_n) \geq 3n-3$ for $n \geq 1$. It can be verified that the drawing methods for $G^6_{90} \Box P_n$ displayed in Figure \ref{fig-90Pn} and $G^6_{120} \Box P_n$ displayed in Figure \ref{fig-120Pn} each realise precisely $3n-3$ crossings, completing the proof.\end{proof}

\begin{theorem}Consider the cycle graph $C_n$ for $n \geq 6$. Then, $cr(G^6_{59} \Box C_n) = cr(G^6_{60} \Box C_n) = cr(G^6_{83} \Box C_n) = cr(G^6_{90} \Box C_n) = 4n$.\label{thm-59608390Cn}\end{theorem}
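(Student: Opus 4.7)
The plan is to sandwich each of the four crossing numbers $cr(G \Box C_n)$ for $G \in \{G^6_{59}, G^6_{60}, G^6_{83}, G^6_{90}\}$ between two values equal to $4n$: a lower bound coming from a subgraph of $G$ whose crossing number with $C_n$ is already known to be $4n$, and an upper bound coming from a supergraph of $G$ whose crossing number with $C_n$ is also $4n$. The key structural observation I would make first is that each of the four graphs in question is (up to isomorphism) a $6$-cycle with one or two additional chords, while $G^6_{113}$ is a $6$-cycle with three chords, and $G^6_{40}$ is a plain $6$-cycle.

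For the lower bound, I would note that each of $G^6_{59}, G^6_{60}, G^6_{83}, G^6_{90}$ contains $C_6 = G^6_{40}$ as a subgraph (the ``outer'' six-cycle is literally present in each). Since $cr(G \Box C_n)$ is monotone under subgraphs, and since $cr(G^6_{40} \Box C_n) = 4n$ for $n \geq 6$ is recorded in Table \ref{tab-6vcart_cycle}, this immediately yields $cr(G \Box C_n) \geq 4n$ for $n \geq 6$ in each of the four cases.

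For the upper bound, I would invoke the published result $cr(G^6_{113} \Box C_n) = 4n$, also listed in Table \ref{tab-6vcart_cycle}. The inclusions $G^6_{59} \subseteq G^6_{113}$, $G^6_{60} \subseteq G^6_{113}$, and $G^6_{83} \subseteq G^6_{113}$ are transparent from the edge lists, since the chord sets $\{25\}$, $\{24\}$, $\{15,24\}$ are all contained in the chord set $\{15,24,25\}$ of $G^6_{113}$ drawn on the same outer hexagon $1\text{-}2\text{-}3\text{-}4\text{-}5\text{-}6\text{-}1$. The case of $G^6_{90}$, whose chord set is $\{14,15\}$, is not a literal subset, so here I would verify explicitly that an automorphism of the hexagon sends $G^6_{90}$ to a subgraph of $G^6_{113}$; equivalently, that removing the chord $15$ from $G^6_{113}$ produces a graph isomorphic to $G^6_{90}$ (this can be checked by comparing degree sequences and the structure of $G - v$ for the unique vertex of maximum degree). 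Combining the two inclusions gives $cr(G \Box C_n) \leq 4n$, closing the sandwich.

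The only nontrivial step in this plan is the isomorphism argument for $G^6_{90} \subseteq G^6_{113}$, since the other three graphs embed by inspection; I expect this to be the main obstacle, although it is entirely combinatorial and can be dispatched by a direct check. All other ingredients are simply citations from the tables reproduced in the excerpt, consistent with the remark in the introduction that both bounds in Theorem \ref{thm-59608390Cn} come from previously published results.
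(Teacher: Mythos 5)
Your proposal is correct and follows essentially the same route as the paper: the lower bound comes from the contained $6$-cycle $G^6_{40}$ via Richter--Salazar, and the upper bound from the containing graph $G^6_{113}$ via Kle\v{s}\v{c}--Kravecov\'{a}. The only difference is that you spell out the subgraph verifications (in particular the relabelling needed for $G^6_{90} \subseteq G^6_{113}$), which the paper leaves as ``clear''; your checks are accurate.
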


\begin{proof}Consider graphs $G^6_{40}$ and $G^6_{113}$, displayed in Table \ref{tab-6vcart_cycle}. The crossing number $cr(G^6_{40} \Box C_n) = 4n$ for $n \geq 6$ was determined by Richter and Salazar \cite{richtersalazar2001}, and the crossing number $cr(G^6_{113} \Box C_n) = 4n$ for $n \geq 3$ was determined by Kle\v{s}\v{c} and Kravecov\'{a} \cite{klesckravecova2008}. Then, consider graphs $G^6_{59}$, $G^6_{60}$, $G^6_{83}$ and $G^6_{90}$. It is clear that $G^6_{40}$ is a subgraph of each of them, and $G^6_{113}$ is a supergraph of each of them. The result follows immediately.\end{proof}

\begin{figure}[h!]\begin{center}\begin{tikzpicture}[thick,scale=0.45]
\draw (0,0) circle (1.5cm);
\draw (0,0) circle (2.5cm);
\draw (0,0) circle (3.5cm);
\draw (0,0) circle (5.5cm);
\draw (0,0) circle (6.5cm);
\foreach \a in {1,2,...,6}{
\draw (\a*360/6: 1.5cm) node{};
}
\foreach \a in {1,2,...,6}{
\draw (\a*360/6: 2.5cm) node{};
}
\foreach \a in {1,2,...,6}{
\draw (\a*360/6: 3.5cm) node{};
}
\foreach \a in {1,2,...,6}{
\draw (\a*360/6: 5.5cm) node{};
\draw[densely dotted] (\a*360/6: 4.2cm) -- (\a*360/6: 1.5cm);
\draw[densely dotted] (\a*360/6: 6.5cm) -- (\a*360/6: 4.9cm);
\begin{scope}[rotate=\a*360/6]
\node[draw=none,fill=none,transform shape] () at (0:4.5cm) {$\dots$};
\end{scope}
}
\foreach \a in {1,2,...,6}{
\draw (\a*360/6: 6.5cm) node{};
}
\draw (360/6: 1.5cm) to[out=180,in=60] (3*360/6: 1.5cm);
\draw (360/6: 1.5cm) to[out=190,in=120] (4*360/6: 1.5cm);
\draw (1*360/6: 2.5cm) to[out=165,in=30] (2*360/6: 2.15cm);
\draw (2*360/6: 2.15cm) to[out=210,in=75] (3*360/6: 2.5cm);
\draw (1*360/6: 2.5cm) to[out=175,in=30] (2*360/6: 1.95cm);
\draw (2*360/6: 1.95cm) to[out=210,in=90] (3*360/6: 2cm);
\draw (3*360/6: 2cm) to[out=270,in=125] (4*360/6: 2.5cm);
\draw (1*360/6: 3.5cm) to[out=165,in=30] (2*360/6: 3.15cm);
\draw (2*360/6: 3.15cm) to[out=210,in=75] (3*360/6: 3.5cm);
\draw (1*360/6: 3.5cm) to[out=175,in=30] (2*360/6: 2.95cm);
\draw (2*360/6: 2.95cm) to[out=210,in=90] (3*360/6: 3cm);
\draw (3*360/6: 3cm) to[out=270,in=130] (4*360/6: 3.5cm);
\draw (1*360/6: 5.5cm) to[out=165,in=30] (2*360/6: 5.1cm);
\draw (2*360/6: 5.1cm) to[out=210,in=80] (3*360/6: 5.5cm);
\draw (1*360/6: 5.5cm) to[out=175,in=30] (2*360/6: 4.9cm);
\draw (2*360/6: 4.9cm) to[out=210,in=90] (3*360/6: 5cm);
\draw (3*360/6: 5cm) to[out=270,in=135] (4*360/6: 5.5cm);
\draw (1*360/6: 6.5cm) to[out=140,in=30] (2*360/6: 7cm);
\draw (2*360/6: 7cm) to[out=210,in=100] (3*360/6: 6.5cm);
\draw (1*360/6: 6.5cm) to[out=130,in=30] (2*360/6: 7.2cm);
\draw (2*360/6: 7.2cm) to[out=210,in=90] (3*360/6: 7cm);
\draw (3*360/6: 7cm) to[out=270,in=160] (4*360/6: 6.5cm);
\end{tikzpicture}\caption{A drawing of $G^6_{90} \Box P_n$ with $3n-3$ crossings. Each circle of vertices is one copy of $G^6_{90}$.\label{fig-90Pn}}\end{center}\end{figure}

\begin{figure}[h!]\begin{center}\begin{tikzpicture}[thick,scale=0.45]
\draw (0,0) circle (1.5cm);
\draw (0,0) circle (2.5cm);
\draw (0,0) circle (3.5cm);
\draw (0,0) circle (5.5cm);
\draw (0,0) circle (6.5cm);
\foreach \a in {1,2,...,6}{
\draw (\a*360/6: 1.5cm) node{};
}
\foreach \a in {1,2,...,6}{
\draw (\a*360/6: 2.5cm) node{};
}
\foreach \a in {1,2,...,6}{
\draw (\a*360/6: 3.5cm) node{};
}
\foreach \a in {1,2,...,6}{
\draw (\a*360/6: 5.5cm) node{};
\draw[densely dotted] (\a*360/6: 4.2cm) -- (\a*360/6: 1.5cm);
\draw[densely dotted] (\a*360/6: 6.5cm) -- (\a*360/6: 4.9cm);
\begin{scope}[rotate=\a*360/6]
\node[draw=none,fill=none,transform shape] () at (0:4.5cm) {$\dots$};
\end{scope}
}
\foreach \a in {1,2,...,6}{
\draw (\a*360/6: 6.5cm) node{};
}
\draw (360/6: 1.5cm) to[out=190,in=45] (3*360/6: 1.5cm);
\draw (360/6: 1.5cm) to[out=285,in=70] (5*360/6: 1.5cm);
\draw (3*360/6: 1.5cm) to[out=-45,in=170] (5*360/6: 1.5cm);
\draw (1*360/6: 2.5cm) to[out=180,in=60] (3*360/6: 2.5cm);
\draw (1*360/6: 2.5cm) to[out=300,in=60] (5*360/6: 2.5cm);
\draw (3*360/6: 2.5cm) to[out=-60,in=180] (5*360/6: 2.5cm);
\draw (1*360/6: 3.5cm) to[out=310,in=90] (6*360/6: 3cm);
\draw (6*360/6: 3cm) to[out=270,in=50] (5*360/6: 3.5cm);
\draw (1*360/6: 3.5cm) to[out=165,in=30] (2*360/6: 3cm);
\draw (2*360/6: 3cm) to[out=210,in=70] (3*360/6: 3.5cm);
\draw (3*360/6: 3.5cm) to[out=290,in=150] (4*360/6: 3cm);
\draw (4*360/6: 3cm) to[out=-30,in=195] (5*360/6: 3.5cm);
\draw (1*360/6: 5.5cm) to[out=-45,in=90] (6*360/6: 5.1cm);
\draw (6*360/6: 5.1cm) to[out=270,in=40] (5*360/6: 5.5cm);
\draw (1*360/6: 5.5cm) to[out=160,in=30] (2*360/6: 5.1cm);
\draw (2*360/6: 5.1cm) to[out=210,in=80] (3*360/6: 5.5cm);
\draw (3*360/6: 5.5cm) to[out=280,in=150] (4*360/6: 5.1cm);
\draw (4*360/6: 5.1cm) to[out=-30,in=200] (5*360/6: 5.5cm);
\draw (1*360/6: 6.5cm) to[out=140,in=30] (2*360/6: 7cm);
\draw (2*360/6: 7cm) to[out=210,in=100] (3*360/6: 6.5cm);
\draw (1*360/6: 6.5cm) to[out=340,in=90] (6*360/6: 7cm);
\draw (6*360/6: 7cm) to[out=270,in=20] (5*360/6: 6.5cm);
\draw (3*360/6: 6.5cm) to[out=260,in=150] (4*360/6: 7cm);
\draw (4*360/6: 7cm) to[out=-30,in=220] (5*360/6: 6.5cm);
\end{tikzpicture}\caption{A drawing of $G^6_{120} \Box P_n$ with $3n-3$ crossings. Each circle of vertices is one copy of $G^6_{120}$.\label{fig-120Pn}}\end{center}\end{figure}

\begin{theorem}Consider the cycle graph $C_n$. Then:
\begin{enumerate}\item $cr(G^6_{63} \Box C_n) = 2n$, for $n \geq 4$
\item $cr(G^6_{64} \Box C_n) = 2n$, for $n \geq 6$
\item $cr(G^6_{66} \Box C_n) = cr(G^6_{70} \Box C_n) = cr(G^6_{98} \Box C_n) = 3n$, for $n \geq 5$
\item $cr(G^6_{75} \Box C_n) = 2n$, for $n \geq 4$
\item $cr(G^6_{77} \Box C_n) = 2n$, for $n \geq 6$.
\item $cr(G^6_{92} \Box C_n) = 3n$, for $n \geq 4$\end{enumerate}\label{thm-cycles}\end{theorem}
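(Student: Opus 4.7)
The plan is to follow the same strategy used in Theorems \ref{thm-paths} and \ref{thm-59608390Cn}: for each of the seven graphs listed, establish the lower bound by exhibiting an appropriate edge-induced subgraph whose Cartesian product with $C_n$ is already known to meet the claimed value (reading off Table \ref{tab-6vcart_cycle}), and establish the upper bound by providing a concrete drawing of $G^6_i \Box C_n$ realizing exactly the target number of crossings. Unlike Theorem \ref{thm-59608390Cn}, the upper bounds here cannot be obtained from supergraphs with the same known crossing number, so each target graph will need its own figure in the style of Figures \ref{fig-110Pn} and \ref{fig-137Pn}.

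For the lower bounds, I would inspect each target graph and locate one of the following as an edge-induced subgraph: $G^6_{42}$, $G^6_{44}$, or $G^6_{49}$ (each contributing $2n$ for $n \geq 4$) inside $G^6_{63}$ and $G^6_{75}$; $G^6_{47}$, $G^6_{53}$, or $G^6_{54}$ (each contributing $2n$ for $n \geq 6$) inside $G^6_{64}$ and $G^6_{77}$; $G^6_{67}$ (contributing $3n$ for $n \geq 4$) inside $G^6_{92}$; and $G^6_{67}$ or $G^6_{78}$ (contributing $3n$ in the appropriate range) inside $G^6_{66}$, $G^6_{70}$, and $G^6_{98}$. Since $H \subseteq G$ implies $H \Box C_n \subseteq G \Box C_n$, each such identification transfers the lower bound to the target, in exactly the manner used in the closing paragraph of Theorem \ref{thm-59608390Cn}. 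The range conditions $n \geq 4$, $n \geq 5$, or $n \geq 6$ in each item of the theorem statement are dictated by whichever established result is the bottleneck.

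For the upper bounds, the plan is to draw each $G^6_i \Box C_n$ by arranging $n$ copies of $G^6_i$ as concentric "tracks" of a standard cylindrical/radial layout (as in Figures \ref{fig-110Pn} and \ref{fig-137Pn}), with the $C_n$-edges drawn radially and wrapping around the outermost track back to the innermost. The goal per copy is $2$ or $3$ crossings (matching the $2n$ or $3n$ claim), which is natural because the chord structure of each $G^6_i$ in question forces a bounded number of unavoidable crossings in every copy. Since each drawing is for a single graph of bounded size, its crossing count can be verified by direct inspection of the figure.

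The main obstacle will be, for each of the seven graphs, producing a drawing whose per-copy crossing count is exactly $2$ or $3$ and whose radial $C_n$-edges introduce no additional crossings. This is the kind of layout where heuristic tools such as QuickCross \cite{quickcross} are useful to find a single-copy drawing with optimal number of crossings in which the six vertices lie on an outer face or otherwise admit a clean radial extension; once such a layout is found for one copy, replicating it $n$ times and connecting the copies cyclically gives the full drawing with the claimed total. I would present the seven cases in parallel, grouping $G^6_{63}, G^6_{64}, G^6_{75}, G^6_{77}$ (target $2n$) together and $G^6_{66}, G^6_{70}, G^6_{92}, G^6_{98}$ (target $3n$) together, since the drawing templates within each group are essentially the same.
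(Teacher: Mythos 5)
Your overall strategy---lower bounds transferred from subgraphs listed in Table \ref{tab-6vcart_cycle}, upper bounds from explicit drawings---is exactly the paper's, and your identifications for items 1, 2, 4, 5 and 6 are sound: $G^6_{42} \subset G^6_{63}$, $G^6_{47} \subset G^6_{64}$, $G^6_{49} \subset G^6_{75}$, $G^6_{53} \subset G^6_{77}$ and $G^6_{67} \subset G^6_{92}$ all appear in your candidate lists and give the right bounds on the right ranges. The gap is in item 3. Neither $G^6_{67}$ nor $G^6_{78}$ is a subgraph of $G^6_{66}$, $G^6_{70}$ or $G^6_{98}$: both candidates have $7$ edges and contain a vertex of degree $4$, whereas $G^6_{66}$ and $G^6_{70}$ also have exactly $7$ edges but maximum degree $3$, so containment is impossible; and for $G^6_{98}$ (8 edges, unique degree-$4$ vertex) a short check of the neighbourhood of that vertex shows neither candidate embeds either. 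Moreover, even if $G^6_{78}$ did embed, its known value $3n$ holds only for $n \geq 6$, which would not cover the $n=5$ case claimed in item 3.

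The subgraph that actually works---and the one the paper uses---is $G^6_{41}$, the $5$-cycle with a pendant vertex, for which $cr(G^6_{41} \Box C_n) = 3n$ for $n \geq 5$. One has $G^6_{41} \subset G^6_{66} \subset G^6_{98}$ and $G^6_{41} \subset G^6_{70} \subset G^6_{98}$ (each of $G^6_{66}$ and $G^6_{70}$ contains a $5$-cycle through five of its vertices with the sixth attached as a pendant), and this delivers the $3n$ lower bound on exactly the range $n \geq 5$. Your upper-bound plan (per-copy drawings with $2$ or $3$ crossings, replicated cyclically, no crossings on the cycle edges) is what the paper does, albeit in a linear ladder layout rather than concentric circles; that part is fine, though note you are handling eight graphs, not seven.
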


\begin{proof}Consider graphs $G^6_{j}$ for $j \in \{41, 42, 47, 49, 53, 67\}$, all of which are displayed in Table \ref{tab-6vcart_cycle}, along with their crossing numbers, each of which were determined by Dra\v{z}ensk\'{a} and Kle\v{s}\v{c} \cite{drazenskaklesc2011}.


If we use $\subset$ to denote subgraphs, then the following can be easily verified. First, $G^6_{41} \subset G^6_{66} \subset G^6_{98}$, and $G^6_{41} \subset G^6_{70} \subset G^6_{98}$. Second, $G^6_{42} \subset G^6_{63}$. Third, $G^6_{47} \subset G^6_{64}$. Fourth, $G^6_{49} \subset G^6_{75}$. Fifth, $G^6_{53} \subset G^6_{77}$. Finally, $G^6_{67} \subset G^6_{92}$. It can be checked that these imply lower bounds for $cr(G^6_j \Box C_n)$ that meet the proposed values for each of $j \in \{63,$ $64,$ $66,$ $70,$ $75,$ $77,$ $92,$ $98$\}. Then, all that remains is to provide upper bounds that also meet the proposed values. Drawing methods which realise the proposed values for $j \in \{63, 64, 75, 77, 92, 98\}$ are displayed in Figures \ref{fig-63Cn}--\ref{fig-98Cn}. Note that the drawing method for $G^6_{98}$ also provides the corresponding drawing method for its subgraphs $G^6_{66}$ and $G^6_{70}$ if one removes the corresponding edges.\end{proof}

\begin{figure}[H]\begin{center}\begin{tikzpicture}[thick,scale=0.45]
\foreach \x in {0,5,10} {
	\foreach \y in {0,1,2,...,5} {
		\node[draw] at (\x,\y) {} ;
	};
	\draw (\x,5) -- (\x,0);
	\draw (\x,3) to[out=-10,in=90] (\x+0.75,2);
	\draw (\x+0.75,2) to[out=270,in=10] (\x,1);
	\draw (\x,2) to[out=190,in=90] (\x-0.75,1);
	\draw (\x-0.75,1) to[out=270,in=170] (\x,0);
};
\foreach \x in {0,1,2,...,5} {
	\draw[densely dotted] (-1,\x) -- (11,\x);
};
\end{tikzpicture}\caption{A drawing of $G^6_{63} \Box C_n$ with $2n$ crossings. The solid edges are the copies of $G^6_{63}$, while the dotted edges are those introduced by the Cartesian product.\label{fig-63Cn}}\end{center}\end{figure}

\begin{figure}[H]\begin{center}\begin{tikzpicture}[thick,scale=0.45]
\foreach \x in {0,5,10} {
	\foreach \y in {0,1,2,...,5} {
		\node[draw] at (\x,\y) {} ;
	};
	\draw (\x,5) -- (\x,0);
	\draw (\x,4) to[out=190,in=90] (\x-0.75,3);
	\draw (\x-0.75,3) to[out=270,in=170] (\x,2);
	\draw (\x,2) to[out=190,in=90] (\x-0.75,1);
	\draw (\x-0.75,1) to[out=270,in=170] (\x,0);
};
\foreach \x in {0,1,2,...,5} {
	\draw[densely dotted] (-1,\x) -- (11,\x);
};
\end{tikzpicture}\caption{A drawing of $G^6_{64} \Box C_n$ with $2n$ crossings. The solid edges are the copies of $G^6_{64}$, while the dotted edges are those introduced by the Cartesian product.\label{fig-64Cn}}\end{center}\end{figure}

\begin{figure}[H]\begin{center}\begin{tikzpicture}[thick,scale=0.45]
\foreach \x in {0,5,10} {
	\foreach \y in {0,1,2,...,5} {
		\node[draw] at (\x,\y) {} ;
	};
	\draw (\x,5) -- (\x,0);
	\draw (\x,4) to[out=-10,in=90] (\x+0.75,3);
	\draw (\x+0.75,3) to[out=270,in=10] (\x,2);
	\draw (\x,3) to[out=190,in=90] (\x-0.75,2);
	\draw (\x-0.75,2) to[out=270,in=170] (\x,1);
};
\foreach \x in {0,1,2,...,5} {
	\draw[densely dotted] (-1,\x) -- (11,\x);
};
\end{tikzpicture}\caption{A drawing of $G^6_{75} \Box C_n$ with $2n$ crossings. The solid edges are the copies of $G^6_{75}$, while the dotted edges are those introduced by the Cartesian product.\label{fig-75Cn}}\end{center}\end{figure}

\begin{figure}[H]\begin{center}\begin{tikzpicture}[thick,scale=0.45]
\foreach \x in {0,5,10} {
	\foreach \y in {0,1,2,...,5} {
		\node[draw] at (\x,\y) {} ;
	};
	\draw (\x,5) -- (\x,0);
	\draw (\x,5) to[out=-10,in=90] (\x+0.75,4);
	\draw (\x+0.75,4) to[out=270,in=10] (\x,3);
	\draw (\x,2) to[out=-10,in=90] (\x+0.75,1);
	\draw (\x+0.75,1) to[out=270,in=10] (\x,0);
};
\foreach \x in {0,1,2,...,5} {
	\draw[densely dotted] (-1,\x) -- (11,\x);
};
\end{tikzpicture}\caption{A drawing of $G^6_{77} \Box C_n$ with $2n$ crossings. The solid edges are the copies of $G^6_{77}$, while the dotted edges are those introduced by the Cartesian product.\label{fig-77Cn}}\end{center}\end{figure}

\begin{figure}[H]\begin{center}\begin{tikzpicture}[thick,scale=0.45]
\foreach \x in {0,5,10} {
	\foreach \y in {0,1,2,...,5} {
		\node[draw] at (\x,\y) {} ;
	};
	\draw (\x,5) -- (\x,0);
	\draw (\x,5) to[out=-10,in=90] (\x+0.75,4);
	\draw (\x+0.75,4) to[out=270,in=10] (\x,3);
	\draw (\x,3) to[out=-10,in=90] (\x+0.75,2);
	\draw (\x+0.75,2) to[out=270,in=10] (\x,1);
	\draw (\x,2) to[out=190,in=90] (\x-0.75,1);
	\draw (\x-0.75,1) to[out=270,in=170] (\x,0);
};
\foreach \x in {0,1,2,...,5} {
	\draw[densely dotted] (-1,\x) -- (11,\x);
};
\end{tikzpicture}\caption{A drawing of $G^6_{92} \Box C_n$ with $3n$ crossings. The solid edges are the copies of $G^6_{92}$, while the dotted edges are those introduced by the Cartesian product.\label{fig-92Cn}}\end{center}\end{figure}

\begin{figure}[H]\begin{center}\begin{tikzpicture}[thick,scale=0.45]
\foreach \x in {0,5,10} {
	\foreach \y in {0,1,2,...,5} {
		\node[draw] at (\x,\y) {} ;
	};
	\draw (\x,5) -- (\x,0);
	\draw (\x,4) to[out=-10,in=90] (\x+0.75,3);
	\draw (\x+0.75,3) to[out=270,in=10] (\x,2);
	\draw (\x,2) to[out=-10,in=90] (\x+0.75,1);
	\draw (\x+0.75,1) to[out=270,in=10] (\x,0);
	\draw (\x,3) to[out=190,in=90] (\x-0.75,2);
	\draw (\x-0.75,2) to[out=270,in=170] (\x,1);
};
\foreach \x in {0,1,2,...,5} {
	\draw[densely dotted] (-1,\x) -- (11,\x);
};
\end{tikzpicture}\caption{A drawing of $G^6_{98} \Box C_n$ with $3n$ crossings. The solid edges are the copies of $G^6_{98}$, while the dotted edges are those introduced by the Cartesian product.\label{fig-98Cn}}\end{center}\end{figure}

Each of the results in Theorems \ref{thm-59608390Cn} and \ref{thm-cycles} is stated for the Cartesian product of a graph and a sufficiently large cycle. However, for small cycles, the results are not provided in those Theorems. We present them now in Table \ref{tab-small}.

\begin{table}[H]\begin{center}$\begin{array}{|c|c|c|c|c|c|c|c|c|c|c|c|c|}
\hline {\bf i}                  & {\bf 59} & {\bf 60} & {\bf 63} & {\bf 64} & {\bf 66} & {\bf 70} & {\bf 75} & {\bf 77} & {\bf 83} & {\bf 90} & {\bf 92} & {\bf 98}\\
\hline {\bf cr(G^6_i \Box C_3)} & 8  & 8  & 6  & 6  & 7  & 7  & 6  & 6  & 10 & 11 & 9  & 9 \\
\hline {\bf cr(G^6_i \Box C_4)} & 16 & 16 &    & 8  & 12 & 12 &    & 8  & 16 & 16 &    & 12\\
\hline {\bf cr(G^6_i \Box C_5)} & 20 & 20 &    & 10 &    &    &    & 10 & 20 & 20 &    &   \\
\hline\end{array}$\caption{The crossing numbers for the Cartesian products of some six-vertex graphs with small cycles. Only those cases not already handled in Theorems \ref{thm-59608390Cn} and \ref{thm-cycles} are displayed.\label{tab-small}}\end{center}\end{table}

\begin{remark}To verify that the numbers provided in Table \ref{tab-small} are correct, we have submitted each graph to Crossing Number Web Compute \cite{chimaniwiedera2016,chimaniwiederasite2016}, an exact solver designed to handle sparse instances of small to moderate size. The proof files are available upon request from the corresponding author.\label{rem-small}\end{remark}

\begin{theorem}Consider the star graph $S_n$ for $n \geq 1$. Then, $cr(G^6_{62} \Box S_n) = 5\fl[n]\fl[n-1] + 2\fl[n]$.\label{thm-stars}\end{theorem}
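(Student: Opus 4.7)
The plan is to follow the same two-step template used throughout the paper: obtain the lower bound by exhibiting a subgraph whose Cartesian product with $S_n$ has the matching published crossing number, and obtain the upper bound by exhibiting an explicit drawing.

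For the lower bounds, I would scan Table~\ref{tab-6vcart_star} for graphs whose crossing number with $S_n$ already equals the target value. The formula $5\fl[n]\fl[n-1] + 2\fl[n]$ is attained by both $G^6_{27} \Box S_n$ and $G^6_{47} \Box S_n$, so it suffices to check that one of $G^6_{27}$ or $G^6_{47}$ is isomorphic to an edge subgraph of $G^6_{62}$; then $G^6_j \Box S_n \subset G^6_{62} \Box S_n$ gives the required inequality $cr(G^6_{62}\Box S_n)\ge 5\fl[n]\fl[n-1]+2\fl[n]$. Similarly, the formula $6\fl[n]\fl[n-1] + 4n$ is attained by each of $G^6_{93}\Box S_n$, $G^6_{130}\Box S_n$, and $G^6_{137}\Box S_n$, so I would verify that one of these six-vertex graphs sits inside $G^6_{121}$ as an edge subgraph, immediately yielding $cr(G^6_{121}\Box S_n)\ge 6\fl[n]\fl[n-1]+4n$. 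This step is purely combinatorial and amounts to comparing adjacency lists.

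For the upper bounds, I would exhibit drawings in the same spirit as Figures~\ref{fig-110Pn} and~\ref{fig-137Pn}, but adapted to the star. The standard optimal drawing of $S_n$ puts the central vertex at the origin and partitions the $n$ leaves into two rotational ``fans'' of sizes $\fl[n]$ and $\fl[n-1]$ (up to parity). Each of the $n+1$ copies of the six-vertex graph ($G^6_{62}$ or $G^6_{121}$) is drawn in a concentric annulus; the central copy is drawn with its best plane embedding (both $G^6_{62}$ and $G^6_{121}$ are planar or near-planar, and can be embedded so that the vertices appear in a natural cyclic order with only a handful of forced crossings). The six ``spoke'' edges of the star connecting each vertex of the central copy to the corresponding vertex of each leaf copy are routed outward through this cyclic order. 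A pair of leaf copies on the same side of the star then contributes exactly $5$ crossings (for $G^6_{62}$) or $6$ crossings (for $G^6_{121}$) between their spoke bundles, summing to $5\fl[n]\fl[n-1]$ and $6\fl[n]\fl[n-1]$ respectively; the remaining $2\fl[n]$ or $4n$ crossings come from the non-planar interactions of the central copy with its own spokes. A direct count will verify the totals.

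The main obstacle is the upper-bound construction for $G^6_{121}$: with nine edges on six vertices, the central copy cannot be drawn planarly, so the layout must be chosen so that the unavoidable crossings inside the central copy are \emph{exactly} $4n$ when combined with the spokes and no more, and so that each pair of same-side leaves produces no surplus crossings. Finding an arrangement in which the cyclic order of the vertices along the boundary of the central copy is compatible with both the edge set of $G^6_{121}$ and the routing of all $6n$ spokes is the delicate part; once such an ordering is fixed, verifying the crossing count is routine. The $G^6_{62}$ case is analogous but less constrained, and the two subgraph verifications for the lower bounds are immediate.
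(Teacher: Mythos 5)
Your proposal follows the paper's proof essentially exactly: the paper obtains the lower bounds by observing that $G^6_{27}$ (with $cr(G^6_{27}\Box S_n)=5\fl[n]\fl[n-1]+2\fl[n]$, due to Kle\v{s}\v{c} and Schr\"{o}tter) is a subgraph of $G^6_{62}$ and that $G^6_{93}$ (with $cr(G^6_{93}\Box S_n)=6\fl[n]\fl[n-1]+4n$, due to L\"{u} and Huang) is a subgraph of $G^6_{121}$, and then establishes the matching upper bounds by exhibiting explicit drawings (Figures \ref{fig-62Sn} and \ref{fig-121Sn}) of precisely the concentric, two-sided-fan type you describe. The only remaining work in your version is to actually produce and count the two drawings, which is exactly what the paper's figures supply.
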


\begin{proof}Consider the graph $G^6_{27}$, which is displayed in Table \ref{tab-6vcart_star}. The crossing number $cr(G^6_{27} \Box S_n) = 5\fl[n]\fl[n-1] + 2\fl[n]$ for $n \geq 1$ was determined by Kle\v{s}\v{c} and Schr\"{o}tter \cite{klescschrotter2013}. It is clear that $G^6_{27}$ is a subgraph of $G^6_{62}$. Hence, the lower bound is established. It can be verified that the drawing method for $G^6_{62} \Box S_n$, displayed in Figure \ref{fig-62Sn}, suffices to establish the upper bound.\end{proof}

\begin{figure}[H]\begin{center}\begin{tikzpicture}[thick,scale=0.6]
\foreach \x in {0,4,6,8,10,12,16} {
\foreach \y in {0,2,4,...,10} {
\node[draw] at (\x,\y) {} ;
};
\draw[densely dotted] (\x,10) -- (\x,0);
};
\foreach \x in {0,2,4,...,10} {
\draw (6,\x) -- (10,\x);
};
\foreach \x in {0,4,6} {
\draw[densely dotted] (\x,6) to[out=225,in=90] (\x-0.75,3);
\draw[densely dotted] (\x-0.75,3) to[out=270,in=135] (\x,0);
\draw[densely dotted] (\x,6) to[out=235,in=90] (\x-0.5,4);
\draw[densely dotted] (\x-0.5,4) to[out=270,in=125] (\x,2);
}
\foreach \x in {10,12,16} {
\draw[densely dotted] (\x,6) to[out=-45,in=90] (\x+0.75,3);
\draw[densely dotted] (\x+0.75,3) to[out=270,in=45] (\x,0);
\draw[densely dotted] (\x,6) to[out=-55,in=90] (\x+0.5,4);
\draw[densely dotted] (\x+0.5,4) to[out=270,in=55] (\x,2);
}
\draw[densely dotted] (8,6) to[out=235,in=90] (8-0.5,4);
\draw[densely dotted] (8-0.5,4) to[out=270,in=125] (8,2);
\foreach \x in {10,8} {
\draw (8,\x) to[out=160,in=0] (4,\x+0.5);
\draw (4,\x+0.5) to[out=180,in=20] (0,\x);
\draw (8,\x) to[out=20,in=180] (12,\x+0.5);
\draw (12,\x+0.5) to[out=0,in=160] (16,\x);
\draw (8,\x) to[out=165,in=0] (6,\x+0.3);
\draw (6,\x+0.3) to[out=180,in=15] (4,\x);
\draw (8,\x) to[out=15,in=180] (10,\x+0.3);
\draw (10,\x+0.3) to[out=0,in=165] (12,\x);
}
\draw (8,6) to[out=135,in=0] (4,7.25);
\draw (4,7.25) to[out=180,in=45] (0,6);
\draw (8,6) to[out=45,in=180] (12,7.25);
\draw (12,7.25) to[out=0,in=135] (16,6);
\draw (8,6) to[out=145,in=0] (6,6.75);
\draw (6,6.75) to[out=180,in=35] (4,6);
\draw (8,6) to[out=35,in=180] (10,6.75);
\draw (10,6.75) to[out=0,in=145] (12,6);
\draw (8,4) to[out=110,in=0] (6,6.25);
\draw (6,6.25) to[out=180,in=70] (4,4);
\draw (8,4) to[out=105,in=0] (6,6.5);
\draw (6,6.5) to[out=180,in=0] (3,6.5);
\draw (3,6.5) to[out=180,in=70] (0,4);
\draw (8,4) to[out=75,in=180] (10,6.5);
\draw (10,6.5) to[out=0,in=180] (13,6.5);
\draw (13,6.5) to[out=0,in=110] (16,4);
\draw (8,2) to[out=255,in=0] (6,-0.5);
\draw (6,-0.5) -- (3,-0.5);
\draw (3,-0.5) to[out=180,in=300] (0,2);
\draw (8,2) to[out=285,in=180] (10,-0.5);
\draw (10,-0.5) -- (13,-0.5);
\draw (13,-0.5) to[out=0,in=240] (16,2);
\draw (8,4) to[out=70,in=180] (10,6.25);
\draw (10,6.25) to[out=0,in=110] (12,4);
\draw (8,0) to[out=-45,in=180] (10,-1);
\draw (10,-1) -- (13,-1);
\draw (13,-1) to[out=0,in=225] (16,0);
\draw (8,0) to[out=225,in=0] (6,-1);
\draw (6,-1) -- (3,-1);
\draw (3,-1) to[out=180,in=-45] (0,0);
\draw (8,0) to[out=-35,in=180] (10,-0.75);
\draw (10,-0.75) to[out=0,in=215] (12,0);
\draw (8,0) to[out=215,in=0] (6,-0.75);
\draw (6,-0.75) to[out=180,in=-35] (4,0);
\draw (8,2) to[out=250,in=0] (6,-0.25);
\draw (6,-0.25) to[out=180,in=285] (4,2);
\draw (8,2) to[out=-70,in=180] (10,-0.25);
\draw (10,-0.25) to[out=0,in=250] (12,2);
\draw[densely dotted] (8,0) to[out=235,in=0] (6,-1.25);
\draw[densely dotted] (6,-1.25) to[out=180,in=0] (1,-1.25);
\draw[densely dotted] (1,-1.25) to[out=180,in=270] (-1,2);
\draw[densely dotted] (-1,2) to[out=90,in=270] (-1,5);
\draw[densely dotted] (-1,5) to[out=90,in=180] (2,7.5);
\draw[densely dotted] (2,7.5) to[out=0,in=180] (4,7.5);
\draw[densely dotted] (4,7.5) to[out=0,in=125] (8,6);
\begin{scope}
\node[draw=none,fill=none,transform shape] () at (2,2.5) {\Huge{$\dots$}};
\node[draw=none,fill=none,transform shape] () at (14,2.5) {\Huge{$\dots$}};
\end{scope}
\draw [decorate,decoration={brace,amplitude=5pt,mirror,raise=4ex}]
(0,-0.75) -- (6.5,-0.75) node[draw=none,fill=none,transform shape,midway,yshift=-5.75em]{\Large{$\lfloor \frac{n}{2}\rfloor$}};
\draw [decorate,decoration={brace,amplitude=5pt,mirror,raise=4ex}]
(9.5,-0.75) -- (16,-0.75) node[draw=none,fill=none,transform shape,midway,yshift=-5.75em]{\Large{$\lceil \frac{n}{2}\rceil$}};
\end{tikzpicture}\caption{A drawing of $G^6_{62} \Box S_n$ with $5\fl[n]\fl[n-1] + 2\fl[n]$ crossings. The dotted edges are the copies of $G^6_{62}$, while the solid edges are those introduced by the Cartesian product. There are $\fl[n]$ copies of $G^6_{62}$ on the left, and $\lceil\frac{n}{2}\rceil$ copies of $G^6_{62}$ on the right.\label{fig-62Sn}}\end{center}\end{figure}

\end{document}